\newtheorem{theorem}{Theorem}[section]
\newtheorem{lemma}[theorem]{Lemma}
\newtheorem{proposition}[theorem]{Proposition}
\newtheorem{corollary}[theorem]{Corollary}
\theoremstyle{definition}
\newtheorem{definition}[theorem]{Definition}
\theoremstyle{remark}
\newtheorem{remark}[theorem]{Remark}
\DeclareMathOperator{\barCP2}{\overline{\mathbb{CP}^2}}
\DeclareMathOperator{\xres}{X_{\mathrm{res}}}
\DeclareMathOperator{\xmf}{X_{\mathrm{MF}}}
\DeclareMathOperator{\mmin}{M_{\mathrm{min}}}
\subjclass[2013]{}
\author[Anar Akhmedov]{Anar Akhmedov}
\address {School of Mathematics, University of Minnesota, Minneapolis, MN, 55455, USA}
\email{akhmedov@umn.edu}
\author[\c{C}a\u{g}r{\i} Karakurt]{\c{C}a\u{g}r{\i} Karakurt}
\address{Department of Mathematics, Bo{\u{g}}azi{\c{c}}i University, Bebek, {\.{I}}stanbul, TR-34342, Turkey}
\email{cagri.karakurt@boun.edu.tr}
\author[S\"{u}meyra Sakall{\i}]{S\"{u}meyra Sakall{\i} }
\address{Department of Mathematical Sciences,
University of Arkansas,
Fayetteville, AR 72701, USA}
\email{ssakalli@uark.edu}
\numberwithin{equation}{section}
\title{Generalized Chain Surgeries and Applications}
\begin{document}
\maketitle

\begin{abstract}
We describe the Stein handlebody diagrams of Milnor fibers of Brieskorn singularities $x^p+y^q+z^r=0$. We also study the natural symplectic operation by exchanging two Stein fillings of the canonical contact structure on the links in the case $p=q=r$, where one of the fillings comes from the minimal resolution and the other is the Milnor fiber. We give two different interpretations of this operation, one as a symplectic sum and the other as a monodromy substitution in a Lefschetz fibration.
\end{abstract}

\section{Introduction}

In this paper, which is inspired by the recent work of Akhmedov and Katzarkov \cite{AkhLud}, 
we study a symplectic operation whose origins lie in the singularity theory. Let $f(x,y,z)$ be a polynomial in three complex variables which has an isolated singularity at the origin.  The  intersection of $\{f=0\}$ with the five-dimensional sphere of a small radius $\epsilon>0$ is a real three dimensional manifold called the link of the singularity. The set of complex tangencies forms a contact structure on the link called the canonical contact structure. 
The canonical contact structure admits two natural symplectic fillings, one coming from the minimal resolution and the other from the Milnor fiber $f(x,y,z)=\epsilon$. Except for simple singularites (of type ADE), these two symplectic fillings are nondiffeomorphic. Therefore if one of these fillings symplectically embeds into a closed symplectic 4-manifold $X$, one can define an interesting symplectic operation on $X$ by removing the embedded filling and replacing it by the other yielding a new symplectic 4-manifold. 

Here our focus is the symplectic surgery operation that comes from the Brieskorn singularity $x^p +y^q +z^r = 0$. Let us denote the corresponding Milnor fiber by $M(p,q,r)$. After giving preliminaries in Section \ref{Preliminaries}, in Section \ref{Milnor Fibers of Brieskorn Singularities} we describe a Lefschetz fibration on $M(p,q,r)$ compatible with its Stein structure. In Section \ref{Handlebody diagrams}, we give a detailed Stein handlebody description of $M(p,q,r)$. 

In Section \ref{OBs} we focus on the case $p = q = r$. We first elucidate the Lefschetz fibration on the minimal resolution $\mmin(p,p,p)$. Then we show that the open books on $\partial M(p,p,p)$ and $\partial \mmin(p,p,p)$ induced by the Lefschetz fibrations are the same. In Section \ref{Main Sec} we define our symplectic surgery operations which we call {\em Generalized Chain Surgeries}. The name comes from the mapping class group terminology. More specifically, we call the operation of cutting out the minimal resolution $\mmin(p,p,p)$ and gluing the Milnor fiber $M(p,p,p)$ in a symplectic 4-manifold {\em Generalized Chain Blow-up}, and the reverse operation {\em Generalized Chain Blow-down}. We also obtain a new relation in the mapping class group and via this relation we interpret the generalized chain surgeries as monodromy substitutions. This is in the same sprit as Endo, Mark, Van Horn-Morris' description  \cite{EMVHM} of symplectic rational blowdown as a monodromy substitution.
In Section \ref{Symp Sum}, we prove that the Generalized Chain Blow-up corresponds to the symplectic sum with a degree $p$ hypersurface in $\mathbb{CP}^3$. One can compare this with the following well-known fact: The symplectic rational blowdown of a sphere of self-intersection $-4$ corresponds to symplectic summing with $\mathbb{CP}^2$ along a quadratic curve. 

The last section is about the relation between our surgeries and the chain surgery. We show that the two chain substitutions correspond to exchanging the minimal resolutions with the Milnor fibers of Brieskorn singularities of types $(2,2g+1,4g+2)$ and $(2,2g+2,2g+2)$.

\subsection*{Acknowledgements}
This project has started on January 2019, while the first author was visiting Harvard University. AA would like to thank the Department of Mathematics at Harvard University for its hospitality. AA was partially supported by a Simons Research Fellowship and Collaboration Grant for Mathematicians from the Simons Foundation. Part of this work was done while \c{C}K visited SS at MPIM, and SS visited \c{C}K at IMBM. We would like to thank both MPIM and IMBM for their hospitality and support. We are all very grateful to L. Katzarkov for some helpful discussions. We would like to thank Marco Golla for his interest and comments on our work. We are also very grateful to the referee for constructive comments and suggestions that helped to improve the manuscript.

\section{Preliminaries}
\label{Preliminaries}
We assume the reader is familiar with the basics of contact topology such as Legendrian knots, Legendrian surgery, Stein and Strong fillings, open books, Giroux correspondence. Let us begin with singularities and their resolution graphs.

\subsection{Singularities and their resolution}

Let $g: \mathbb{C}^n \rightarrow \mathbb{C}$ be a function. We say $g$ is singular at $(y_1, \cdots, y_n)$ if $\partial g / \partial x_1 = 0, \cdots, \partial g / \partial x_n = 0$ at $(y_1, \cdots, y_n)$. Suppose now that $g:  (\mathbb{C}^n,0) \rightarrow (\mathbb{C},0) $ is a polynomial which is singular only at the origin. By restricting $g$ to a small ball around the origin, Milnor shows that $g$ defines a fibration outside of the zero locus. By intersecting the zero set $V_g := \{(x_1, \cdots x_n) : g(x_1, \cdots x_n)=0\}$ with the $(2n-1)$-sphere centered at the origin with small enough radius $\epsilon >0$, we get the link of the singularity. By perturbing the zero set with a small parameter $\epsilon >0$, we get a nonsingular set $g^{-1}(\epsilon)$ which is called the Milnor fiber of the singularity.

If there exists a smooth variety $\widetilde V$ and a map $\pi: \widetilde V \rightarrow V_g$ such that $\pi$ is a biholomorphic morphism from $\widetilde V \setminus  \pi^{-1}(0)$ to $V_g \setminus \{0\}$, we call the pair $(\widetilde V, \pi)$ a resolution of the singularity. By \cite{Hrn} every singularity admits a resolution, though it may not be unique.

In this paper we will be interested in the cases $n=2$ and $n=3$ only. When $n=2$, the link of a plane curve singularity is an honest link in $S^3$. For example the link of the singularity $x^p+y^q$ is the $(p,q)$-torus link. Milnor's fibration is a singular fibration over $B^4$ which has one singular fiber over the origin, the regular fibers $M(p,q)$ are genus minimizing Seifert surfaces of the $(p,q)$-torus link. One can resolve any plane curve singularity by successive blow-ups. The explicit method for obtaining resolution graphs of plane curve singularities is described in \cite{EisenbudNeumann}. 
Given $p,q$ let $N = \mathrm{gcd}(p,q)$, $p' = p/N, q' = q/N$ and consider continued fraction expansions 

\begin{equation}
\cfrac{p'}{q'} = a_0 - \cfrac{1}{a_1 - \cfrac{1}{a_2 -\cdots - \cfrac{1}{a_l}}}, \quad \cfrac{q'}{p'} = b_0 - \cfrac{1}{b_1 - \cfrac{1}{b_2 -\cdots - \cfrac{1}{b_k}}}
\end{equation}
\\
where $a_0,b_0 \geq 1$ and $a_i,b_j \geq 2$ for $i, j \neq 0$. Then the resolution graph of the link of the singularity $x^p+y^q$ is given by Figure \ref{ResGr}.
\begin{figure}[h] 
		\includegraphics[width=.6\textwidth]{./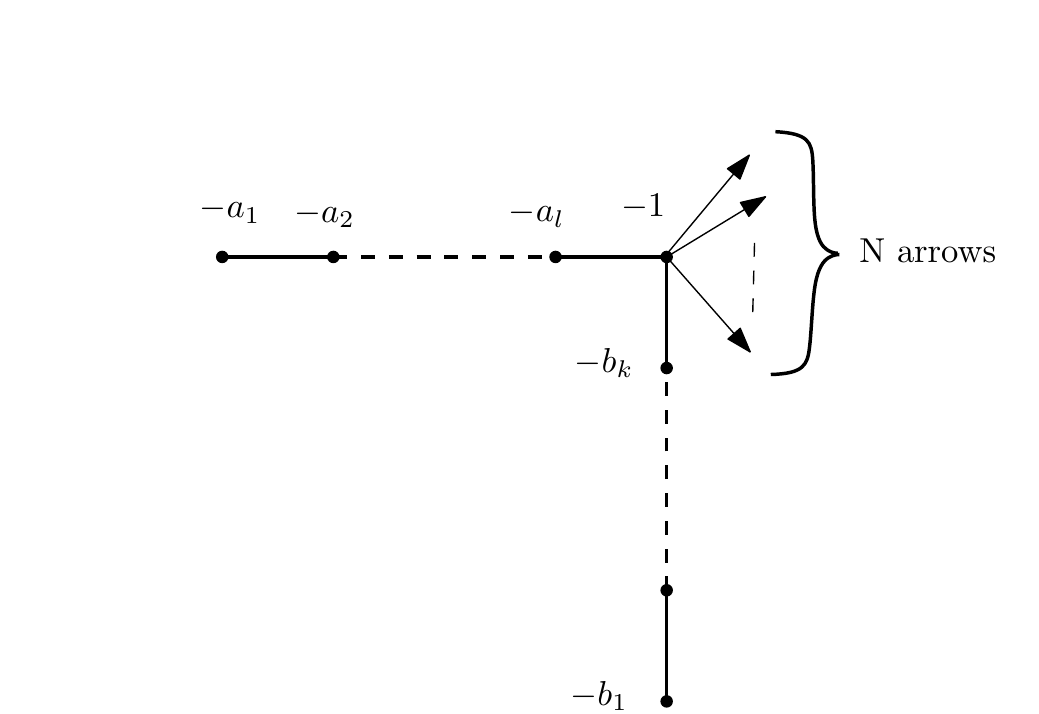}
		\caption{Resolution graph of the link of $x^p+y^q$}
		\label{ResGr}
	\end{figure}

When $n=3$, the link is a closed oriented 3-manifold and its Milnor fiber is a 4-manifold bounded by the link. If $\pi: \widetilde V \rightarrow V_f$ is any resolution, each irreducible component of exceptional divisor $\pi^{-1}(0)$ is a complex curve. We say that the resolution is \emph{minimal} if no irreducible component of its exceptional divisor is a rational curve of self intersection $-1$. We say that a resolution is \emph{good} if its exceptional divisor is a normal crossing divisor. The reader is advised to turn to \cite{Nem} for more information.




Suppose $f: \mathbb{C}^3 \rightarrow \mathbb{C}$ is singular at the origin. The set of complex tangencies of the link forms a contact structure on it, called the canonical contact structure. By \cite{CNP} the canonical contact structure is unique up to contactomorphism.

\subsection{Milnor open books and horizontal open books}

Now let us recall Milnor open books and horizontal open books.

\begin{definition}
Let $g : (\mathbb{C}^3, 0) \rightarrow (\mathbb{C}, 0)$ be a polynomial (with an isolated singularity at $0$) and $(V_g,0)$ is the singular space as above. The link of the singularity is $Y = V_g \cap S^5$. Now let us choose a non-constant function $f: (V_g,0) \rightarrow (\mathbb{C},0)$ holomorphic on $V_g \setminus {0}$. The open book decomposition of the link $Y$ with binding $L = Y \cap f^{-1}(0)$ and projection $\pi = f/{|f|} :Y \setminus L \rightarrow S^1 \in \mathbb{C}$ is called  \emph{the Milnor open book induced by $f$}.
\end{definition}

\begin{definition}
Let $Y$ be a plumbing of circle bundles over surfaces according to a good resolution graph with $s$ vertices. An open book decomposition of $Y$ is said to be \emph{horizontal} if all the pages are transverse to the fibers at each point in their interior, each component of the binding is a fiber and the orientation induced on the binding by the pages coincides with the orientation of the fibers induced by the fibration.
The vector $\mathbf{n} = (n_1, \cdots, n_s)$ where $n_j$ is the number of components of the binding lying as the fibers of the $j$-th vertex, is called the \emph{binding vector} of the horizontal open book. 
\label{horz}
\end{definition}

Milnor open books are all compatible with the natural contact structure on the boundary and they are horizontal when considered on the plumbing description of the Milnor fillable 3-manifold.

The resolution graph of a plane curve singularity naturally describes the link of the singularity as the binding of a horizontal open book of $S^3$, where the component of the binding vector at each vertex is the number of arrows connected to that vertex.

\begin{theorem} (\cite{CNP})
Let $Y$ be as in Definition \ref{horz}. Two horizontal open books on $Y$ are isomorphic if and only if they have the same binding vector.
\label{CNP thm}
\end{theorem}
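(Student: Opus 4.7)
The forward implication is clear, since the binding vector is by definition an invariant of the horizontal open book. The content of the theorem is the converse, so the plan is to show that two horizontal open books on $Y$ sharing a binding vector $\mathbf n = (n_1,\dots,n_s)$ must be isomorphic.

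My strategy would be to work piece by piece over the plumbing. Let $Y = \bigcup_j U_j$ be the decomposition of $Y$ into the closed regular neighborhoods of the exceptional curves $E_j$; each $U_j$ is a circle bundle $S^1 \hookrightarrow U_j \to \Sigma_j$ over a surface $\Sigma_j$ (whose genus and boundary are read from the $j$-th vertex), and adjacent $U_j$'s are glued along boundary tori according to the edges of the graph. First I would establish a local rigidity statement: if $\pi : Y \setminus L \to S^1$ is a horizontal open book fibration, then on each $U_j$ the page $\pi^{-1}(\theta) \cap U_j$ is a compact surface transverse to the circle fibers, with $n_j$ of its boundary components lying along fibers of $U_j$ (these are pieces of the binding) and the remaining boundary components lying on $\partial U_j$. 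Such a horizontal surface in a circle bundle is equivalent, by projecting along the fibers, to a branched cover $\Sigma_j' \to \Sigma_j$ branched only over the points corresponding to the $n_j$ fiberwise binding components. Since a branched cover of a surface is determined up to isotopy by its branching data (together with the degree), the page piece in each $U_j$ is unique up to fiber-preserving isotopy once $n_j$ is fixed; the degree is itself forced by the Euler number of $U_j \to \Sigma_j$ and the requirement that the boundary slopes on $\partial U_j$ match those determined by the horizontality condition.

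Next I would assemble the pieces. On each boundary torus $T \subset \partial U_j \cap \partial U_k$ corresponding to an edge of the plumbing graph, horizontality forces the induced slope of the page boundary to be the unique slope transverse to the two Seifert fiber directions coming from $U_j$ and $U_k$; in particular the slope and the number of boundary circles of the page on $T$ are determined. So the local pages on adjacent Seifert pieces meet along $T$ in matching curves and can be glued up to isotopy of $T$ rel. these curves. Doing this for every torus, one concludes that the total page $\Sigma = \pi^{-1}(\theta)$ is determined up to isotopy in $Y \setminus L$ by $\mathbf n$. Finally, the monodromy is the first-return map of the horizontal foliation, and along each Seifert piece this first-return map is conjugate to a specific rigid piece (a fractional Dehn twist along the fibers composed with the covering action on $\Sigma_j$) whose data is again captured entirely by $n_j$ and the Euler number of $U_j$. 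Hence the monodromy is also determined up to conjugation, and the two open books are isomorphic.

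The main obstacle in this plan is the branched-cover uniqueness step: one must verify that an arbitrary horizontal surface in a Seifert piece (with prescribed fiberwise boundary multiplicity $n_j$ and the forced boundary slope on the $\partial U_j$-tori) is genuinely isotopic through horizontal surfaces to the standard model, rather than merely being abstractly diffeomorphic to it. I expect this to be handled by the Waldhausen-type rigidity of horizontal (i.e.\ transverse-to-fibers) incompressible surfaces in Seifert fibered pieces, combined with the fact that any two transverse sections of a foliation by circles that meet the fibers with the same intersection numbers are isotopic through transverse sections. A secondary technical point is to keep track of the cyclic ordering of binding components on each $U_j$ when gluing, and check that this ordering is also forced by $\mathbf n$ and the graph; this is where one uses that the orientations of the binding match the fiber orientations, which is built into Definition \ref{horz}.
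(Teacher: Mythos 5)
The paper itself gives no proof of this statement---it is quoted from \cite{CNP}---so the only meaningful comparison is with the argument in that reference, whose broad shape (analyze the pages piecewise over the plumbing as surfaces transverse to the circle fibers, then match monodromies) your outline does follow. There is, however, a genuine gap at the step you treat as local bookkeeping: the determination of the degree $m_j$ with which a page meets the regular fibers of the $j$-th piece $U_j$. This degree is \emph{not} forced by the Euler number of $U_j$ together with ``the boundary slopes determined by horizontality'': the slope and the number of page boundary circles on the plumbing torus between $U_j$ and $U_k$ are expressed in terms of \emph{both} $m_j$ and $m_k$, so what one actually obtains is the coupled linear system $\sum_k E_{jk}\, m_k = -n_j$, where $(E_{jk})$ is the intersection matrix of the plumbing graph (diagonal entries the Euler numbers, off-diagonal entries the adjacencies). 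The assertion that two horizontal open books with the same binding vector have the same multiplicity vector $(m_1,\dots,m_s)$ is exactly the statement that this system has a unique solution, i.e.\ that the intersection matrix is nondegenerate---and this is where the hypothesis in Definition \ref{horz} that the graph is a \emph{good resolution graph} (hence negative definite) enters. Your proposal never invokes this hypothesis, and without equality of the multiplicity vectors none of the subsequent identifications (branch data, slopes on the tori, first-return maps) can even be set up.

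A second problem is the claim that ``a branched cover of a surface is determined up to isotopy by its branching data (together with the degree).'' This is false for branched covers in general (inequivalent covers with identical branch data exist), so the local rigidity cannot be outsourced to that statement; it must come from the stronger structure you only flag as the ``main obstacle,'' namely that a horizontal surface in a circle-bundle piece with prescribed multiplicity, binding components and boundary behaviour is isotopic through horizontal surfaces to an explicit model, after which the monodromy is computed as a composition of fractional twists along the fibers determined by the $m_j$, $n_j$ and $e_j$, and the isomorphism of open books is read off from this normal form. So your plan has the right general shape, but the two load-bearing steps---equality of the multiplicity vectors (via negative definiteness of the graph) and rigidity of the horizontal pieces (via an explicit model rather than abstract branched-cover uniqueness)---are precisely the ones left unproved.
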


\section{Milnor Fibers of Brieskorn Singularities}
\label{Milnor Fibers of Brieskorn Singularities}
Let $p$, $q$ and $r$ be integers $\geq 2$. Consider the complex hypersurface singularity 
$$f(x,y,z)=x^p+y^q+z^r=0.$$
For any $\epsilon \in \mathbb{C}$ which is small in norm,  we denote the Milnor fiber $f^{-1}(\epsilon)$ by $M(p,q,r)$. Our aim is to describe a compatible Lefschetz fibration on $M(p,q,r)$ compatible with its Stein structure. 

Consider the plane curve singularity 
$$g(x,y)=x^p+y^q.$$
The link of this singularity is the $(p,q)$ torus link in $S^3$.  By Milnor's theorem the map $(x,y)\to \frac{g(x,y)}{|g(x,y)|}$ is a fibration on $\{(x,y)\,:\,x^2+y^2\leq \epsilon \} \setminus\{g(x,y)=0 \}$ for small $\epsilon$. Denote the regular fibers by $M(p,q)$. In $S^3$ these regular fibers are isotopic to a genus minimizing Seifert surface of the $(p,q)$ torus link (see Figure~\ref{fig:PALF2}). To construct this surface we consider the braid description of the $(p,q)$ torus link which is a braid on $p$ strands corresponding to the positive word $(\sigma_1 \cdots \sigma_{p-1})^q$. Then we see the quasipositive Seifert surface by taking $p$ disks attaching a twisted band for each element appearing in the word, as shown in the figure for $p=q=3$ case.

By a small perturbation the unique singular fiber over the origin can be turned into $(p-1)(q-1)$ nodal singularities, and the above singular fibration can be turned into a positive allowable Lefschetz fibration on $B^4$ over the $2$-disk compatible with the standard Stein structure on $B^4$. The monodromy of this Lefschetz fibration is $\phi_{p,q}=\prod_{j=1}^{q-1} \prod_{i=1}^{p-1} t_{i,j}$ where $t_{i,j}$ is the right handed Dehn twist about the simple closed curve $\gamma_{i,j}$ as in Figure \ref{fig:PALF2}.  (\cite{AkOz}[Theorem 1])
\begin{figure}[h]
	\includegraphics[width=0.70\textwidth]{./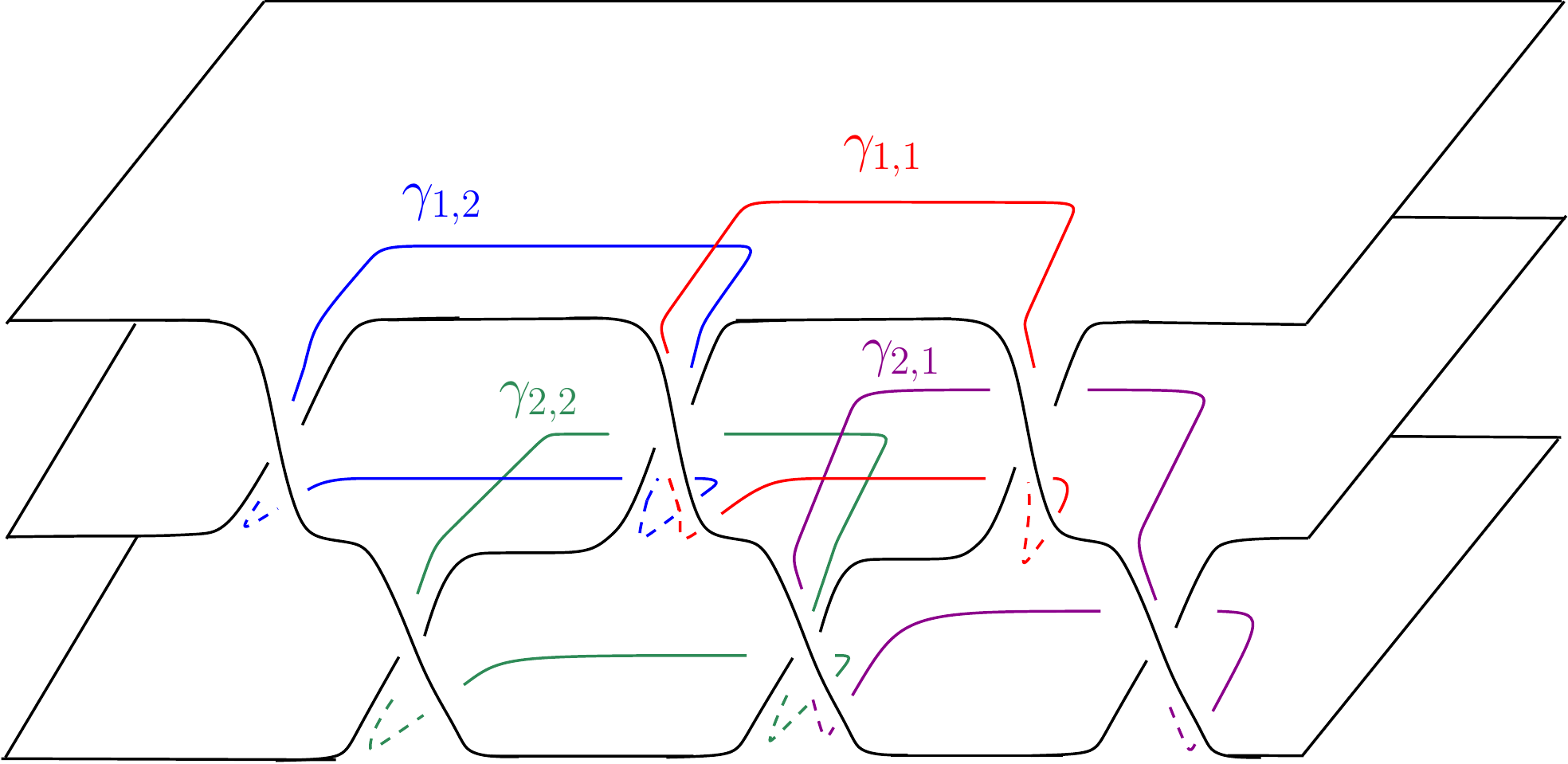}
	\caption{Genus minimizing Seifert surface of $(3,3)$ torus link with its vanishing cycles}
	\label{fig:PALF2}
\end{figure}

Now $M(p,q,r)$ is just an $r$-fold branched covering of $B^4$ branched along a regular fiber $M(p,q)$. The PALF on $B^4$ naturally gives rise to a PALF on $M(p,q,r)$ whose monodromy is $\phi_{p,q}^r$. With this information one can get quickly the smooth handlebody picture of $M(p,q,r)$  (compare \cite{Akb}, \cite{AkKir} and \cite{KirMel}). The Lefschetz fibration on $M(p,q,r)$ is obtained from the one on $B^4$ by attaching $(p-1)(q-1)(r-1)$ 2-handles along vanishing cycles according to the word $\phi_{p,q}^{(r-1)}$ with framing one less than the page framing. Notice that the exponent of $\phi_{p,q}$ is reduced by one to reconstruct $B^4$ with the PALF of $(p,q)$-torus knot. Clearly, each $\gamma_{i,j}$ bounds a disk in $S^3$, the page framing relative to this disk is $-1$ as $\gamma_{i,j}$ passes through two bands each making a left handed half twist. Hence the framing on each 2-handle is $-2$ relative to the Seifert framing. To sum up, we saw that $M(p,q,r)$ admits a handlebody diagram consisting of $(p-1)(q-1)(r-1)$ 2-handles each of which is attached on an unknot with framing $-2$. From this we recover the well-known fact that $M(p,q,r)$ has the homotopy type of wedge of ($\mu = (p-1)(q-1)(r-1)$) 2-spheres. Here $\mu$ is the Milnor number of the singularity and also the intersection form is even. 

Linking information of the 2-handles comes from the order of the Dehn twists. For example, after we attach a 2-handle on $\gamma_{1,1}$ we need to take a push-off $\gamma_{1,2}^+$ of $\gamma_{1,2}$ using the page framing. This will cause a linking between $\gamma_{1,1}$ and $\gamma_{1,2}^+$. Orienting each $\gamma_{i,j}$ counterclockwise we get the following linking information. 
\begin{align*} 
\mathrm{lk} (\gamma_{i,j}, \gamma_{i+1,j}^+) &=1, \; \text{for all } \; 1 \leq i \leq p-2, \; \text{for all } \; 1 \leq j \leq p-1,\\
\mathrm{lk} (\gamma_{i,j}, \gamma_{i,j+1}^+) &=1, \; \text{for all } \; 1 \leq i \leq p-1, \; \text{for all } \; 1 \leq j \leq p-2,\\
\mathrm{lk} (\gamma_{i,j}, \gamma_{i+1,j+1}^+) &=-1, \; \text{for all } \;  1 \leq i \leq p-2, \; \text{for all } \;  1 \leq j \leq p-2,\\
\mathrm{lk} (\gamma_{i,j}, \gamma_{i,j}^+) &=-1,  \; \text{for all } \; 1 \leq i \leq p-1, \; \text{for all } \; 1 \leq j \leq p-1.
\end{align*} 

The above information uniquely determines the linking between attaching circles of 2-handles. From this one can easily read off the intersection form. Let $h_{i,j}^k, \; 1\leq i \leq p-1, 1\leq j \leq q-1, 1\leq k \leq r-1$ denote the 2-handle corresponding to the vanishing cycle $\gamma_{i,j}$ in the level $k$. Each $h_{i,j}^k$ uniquely determines a homology class $[h_{i,j}^k]$ represented by an embedded sphere which is composed of the core disk of the 2-handle together with the trivial Seifert surface of $\gamma_{i,j}$ pushed into $B^4$. The homology classes $[h_{i,j}^k]$ form a basis for $H_2(M(p,q,r))$. The intersection data is as follows.
\begin{align*} 
[h_{i,j}^k] [h_{i,j}^k] &= -2,\; 1\leq i \leq p-1, 1\leq j \leq q-1, 1\leq k \leq r-1, \\
[h_{i,j}^k] [h_{i+1,j}^l] &= 1,\; 1\leq i \leq p-1, 1\leq j \leq q-1, 1\leq k \leq r-1, 1\leq l \leq r-1,\\
[h_{i,j}^k] [h_{i,j+1}^l] &= 1,\; 1\leq i \leq p-1, 1\leq j \leq q-1, 1\leq k \leq r-1,1\leq l \leq r-1,\\
[h_{i,j}^k] [h_{i+1,j+1}^l] &= -1,\; 1\leq i \leq p-1, 1\leq j \leq q-1, 1\leq k \leq r-1,1\leq l \leq r-1,\\
[h_{i,j}^k] [h_{i,j}^l] &=-1,\; 1\leq i \leq p-1, 1\leq j \leq q-1, 1\leq k \leq r-1,1\leq l \leq r-1.\\
\end{align*} 

For example when $r=2$, the intersection form is given by the incidence matrix of the $(p-1)\times (q-1)$ hexagonal sphere packing as in Figure \ref{fig:pack2}. The vertices of the packing represent $[h_{i,j}]$. Here we drop the super scripts $k$, as $r=2$ the only possible value for $k$ is 1. The edges labeled with $\pm$ signs indicate intersection numbers, the index $i$ increases from right to left and the index $j$ increases from top to bottom. Note that only the edges with positive slopes have negative labels. We would like to point out that our description of the intersection form agrees with that of \cite{Gabr}.
	\begin{figure}[h] 
		\includegraphics[width=.50\textwidth]{./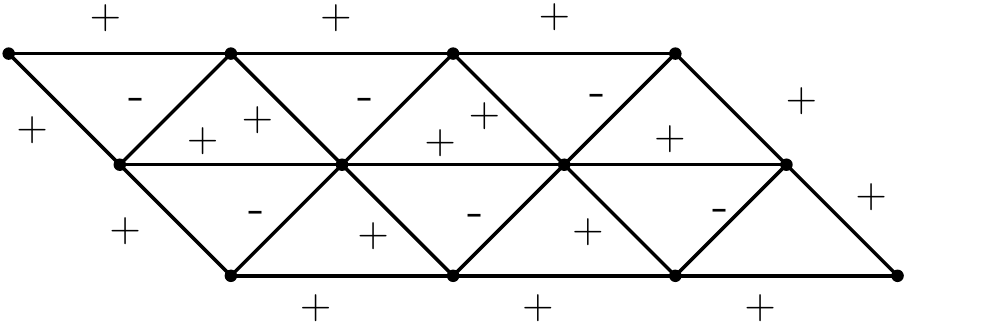}
		\caption{The $(3\times 4)$ sphere packing representing the intersection form of $M(4,5,2)$.}
		\label{fig:pack2}
	\end{figure}

\section{Handlebody diagrams}
\label{Handlebody diagrams}
Next we show how to convert the PALF described in the previous section, into a Stein handlebody diagram.  We shall describe an explicit algorithm giving Stein handlebody picture of $M(p,q,r)$ and as a byproduct we get the contact surgery diagram of the canonical contact structure on the  Brieskorn sphere $\Sigma (p,q,r)$. The latter was also given by the work of Harvey-Kawamuro and Plamenevskaya in \cite{HarKawPla}. In fact our argument is similar to theirs but we need to  adapt it to our $4$-dimensional setting. 
\begin{theorem}
$M(p,q,r)$ with the canonical Stein structure is given by a handlebody diagram consisting of $(p-1)(q-1)(r-1)$ Legendrian 2-handles attached along standard Legendrian unknots. 
\label{thm4.1}
\end{theorem}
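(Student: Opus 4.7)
The plan is to apply the Akbulut–Ozbagci algorithm, which converts a PALF on a compact $4$-manifold with $B^4$ base into a Stein handle decomposition in which each vanishing cycle becomes a Legendrian curve on the Legendrian page and is the attaching region of a $2$-handle with framing $\mathrm{tb}-1$. Since the PALF on $M(p,q,r)$ built in Section \ref{Milnor Fibers of Brieskorn Singularities} is obtained from the standard PALF on $B^4$ for the $(p,q)$-torus link by attaching one $2$-handle for each of the $(p-1)(q-1)(r-1)$ Dehn twists in $\phi_{p,q}^r$ beyond one base factor of $\phi_{p,q}$, this immediately yields the required handle count. It then remains to check that each such $2$-handle is attached along a standard Legendrian unknot with framing $\mathrm{tb}-1=-2$.

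For this I would draw the quasipositive Seifert page inside the standard contact $S^3$ with the $p$ parallel disks realized as nested concentric Legendrian unknots and each of the $(p-1)(q-1)$ half-twisted bands realized as a Legendrian ribbon with a single pair of cusps. In this realization, each vanishing cycle $\gamma_{i,j}$ of Figure \ref{fig:PALF2} appears as a Legendrian loop that encircles two adjacent bands; because $\gamma_{i,j}$ bounds the evident embedded disk in $S^3$ transverse to the page, its Legendrian realization has just one pair of cusps, with $\mathrm{tb}=-1$ and $\mathrm{rot}=0$, making it a standard Legendrian unknot. The contact framing $\mathrm{tb}-1=-2$ coincides with the page framing computed in Section \ref{Milnor Fibers of Brieskorn Singularities}, since $\gamma_{i,j}$ picks up two left-handed half-twists from the bands it crosses.

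The main obstacle is controlling the iterated copies $\gamma_{i,j}^k$ that arise from pushing $\gamma_{i,j}$ off the page via the page framing in each of the $r-1$ additional copies of $\phi_{p,q}$. I would handle this one level at a time: after attaching the level-$k$ handles, I draw a new Legendrian page of the induced open book and realize the next vanishing cycle as the Legendrian push-off of the previous one using the page framing. Because each $\gamma_{i,j}$ bounds a page-transverse disk, the Legendrian push-off agrees up to Legendrian isotopy with a standard Legendrian unknot, and the pairwise linking numbers between the resulting copies in $S^3$ reproduce precisely the signs $\pm 1$ recorded at the end of Section \ref{Milnor Fibers of Brieskorn Singularities}. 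Restricting the resulting Stein handlebody to its boundary recovers the contact surgery diagram of Harvey–Kawamuro–Plamenevskaya for the canonical contact structure on $\Sigma(p,q,r)$, which confirms the identification and completes the proposed argument.
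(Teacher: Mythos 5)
Your argument is essentially the paper's proof: both realize the vanishing cycles of the PALF as standard Legendrian unknots ($\mathrm{tb}=-1$, $\mathrm{rot}=0$) on a Legendrian realization of the $(p,q)$-torus-link page in $(S^3,\xi_{\mathrm{std}})$, use that the page framing agrees with the contact framing (the paper invokes Eliashberg--Fraser and Plamenevskaya's refinement of the Akbulut--Ozbagci algorithm where you instead redraw the page by hand), and handle the extra $r-1$ copies of $\phi_{p,q}$ by Legendrian push-offs reproducing the hexagonal-packing linking data, cross-checked against Harvey--Kawamuro--Plamenevskaya. One small slip: the contact framing is $\mathrm{tb}=-1$, and it is this that coincides with the page framing of Section 3; the handle framing $\mathrm{tb}-1=-2$ then matches the Lefschetz handle framing (page framing minus one).
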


\begin{proof}
	This can be seen as the  reverse Akbulut-Ozbagci algorithm (\cite{AkOz}). Since the monodromy $\phi_{p,q}$ describes a PALF of $B^4$, we can see the rest of the $(p-1)(q-1)(r-1)$ vanishing cycles (belonging to the factorization of $\phi_{p,q}^{r-1})$ as non-separating curves lying on different pages of an open book decomposition of $S^3$ supporting to the standard contact structure of $S^3$. According to the classification of Legendrian unknots given by Eliashberg and Fraser \cite{EliFra} each vanishing cycle is the standard Legendrian unknot. To get the complete handlebody description of $M(p,q,r)$, we need to see these Legendrian unknots explicitly. For this,  we appeal to Plamenevskaya's refinement \cite[Proposition 4]{Pl}  (see also \cite{HarKawPla}) of Akbulut Ozbagci algorithm where it was shown that the curves $\gamma_{i,j}$'s are in fact Legendrian unknots and we notice that the alternative picture of the Seifert surface of the $(p,q)$ torus knot has the property that the page framing agrees with the contact framing (see Figure \ref{SeifertS}). We see that each vanishing cycle $\gamma_{i,j}$ is an unknotted Legendrian knot in $(S^3,\xi_{\mathrm{std}})$ with Thurston-Bennequin number $-1$. We push the vanishing cycles appearing in the factorization of $\phi_{p,q}^{r-1}$ in different pages using the page framing which agrees with the contact framing. The linking information also agrees with the one in the hexagonal sphere packing diagram as in Figure \ref{fig:pack2} with $(p-1)$ rows and $(q-1)$ columns.

\begin{figure}[h] 
		\includegraphics[width=.7\textwidth]{./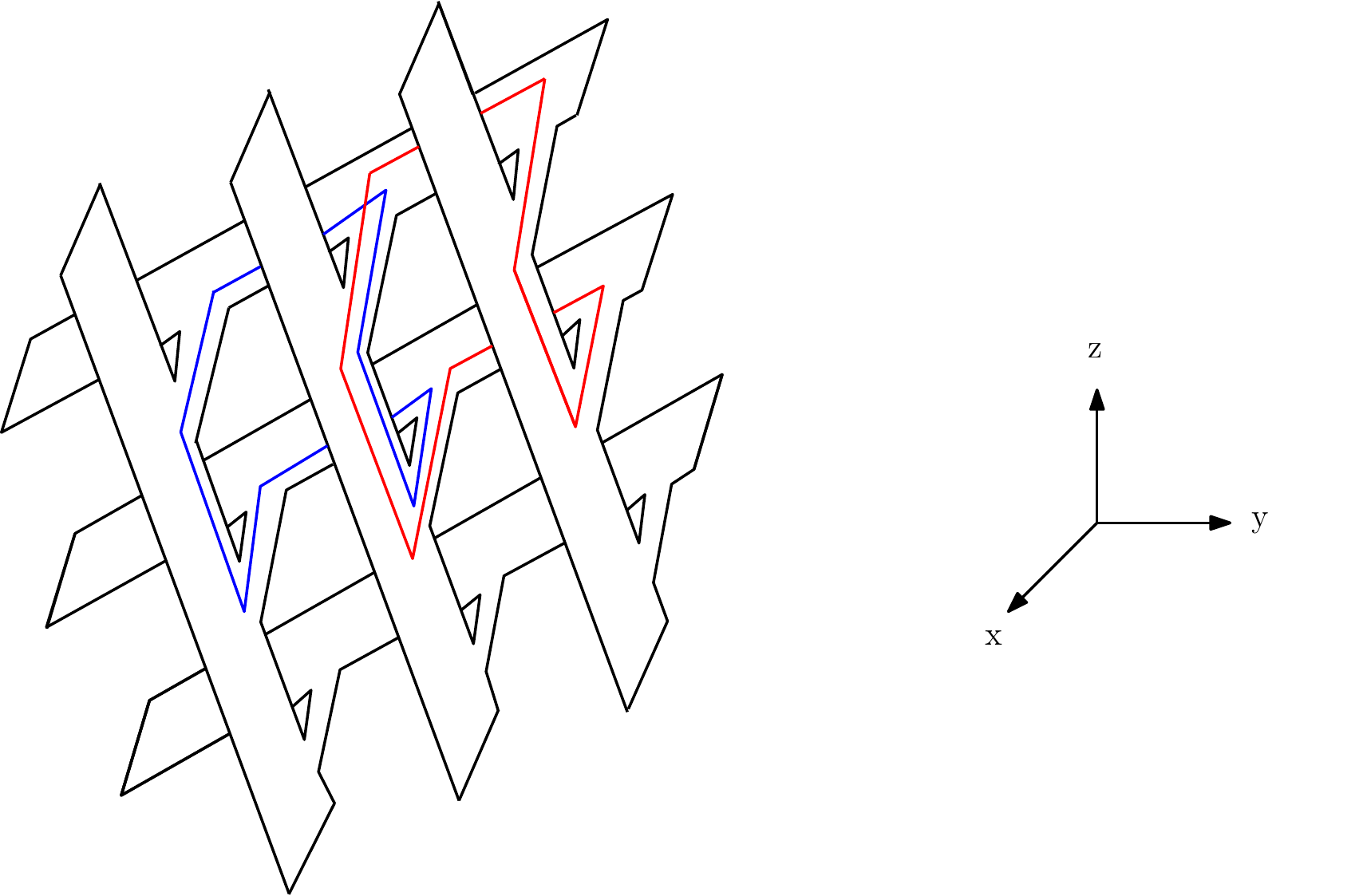}
		\caption{Alternative picture of the Seifert surface of $(3,3)$ torus link}
		\label{SeifertS}

	\end{figure}

The model for $+$ and $-$ linking are shown in Figure \ref{fig:Pm}.  Here the overcrossing and undercrossing information should be inferred from the front projection conventions. We apply this recipe and turn the sphere packing diagram to a Legendrian link diagram in $S^3$ and attach Legendrian 2-handles to attach each component. This gives the Stein handlebody picture for the double branched cover.  When $r\geq 3$, to get $r$-fold branched covers take the entire link diagram and create $r-2$ Legendrian push-offs and attach Stein handles.

		\begin{figure}[h] 
		\includegraphics[width=.40\textwidth]{./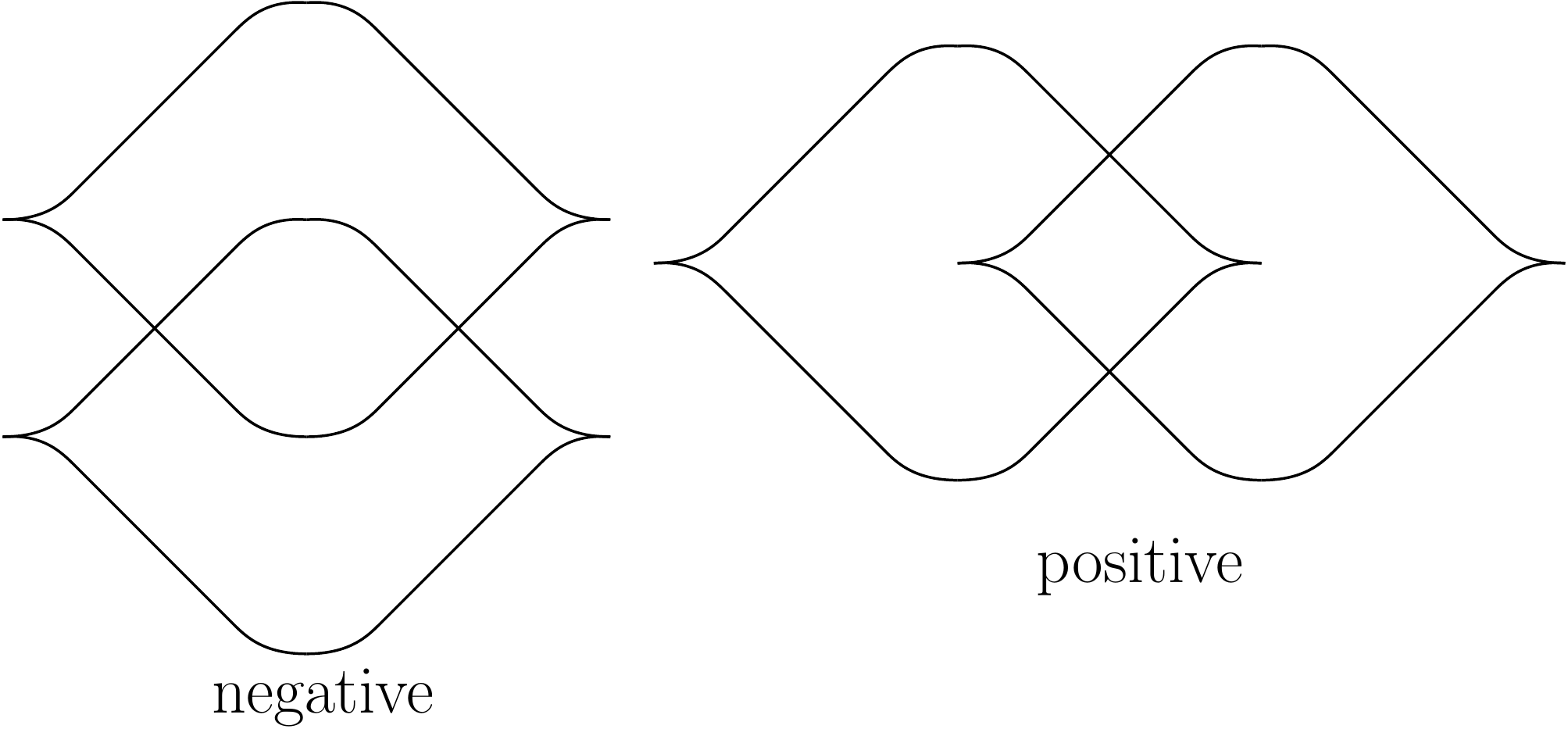}
		\caption{Linking conventions: Negative linking on the left hand side, positive on the right hand side. }
		\label{fig:Pm}
	\end{figure}

		\begin{figure}[h] 
	\includegraphics[width=.70\textwidth]{./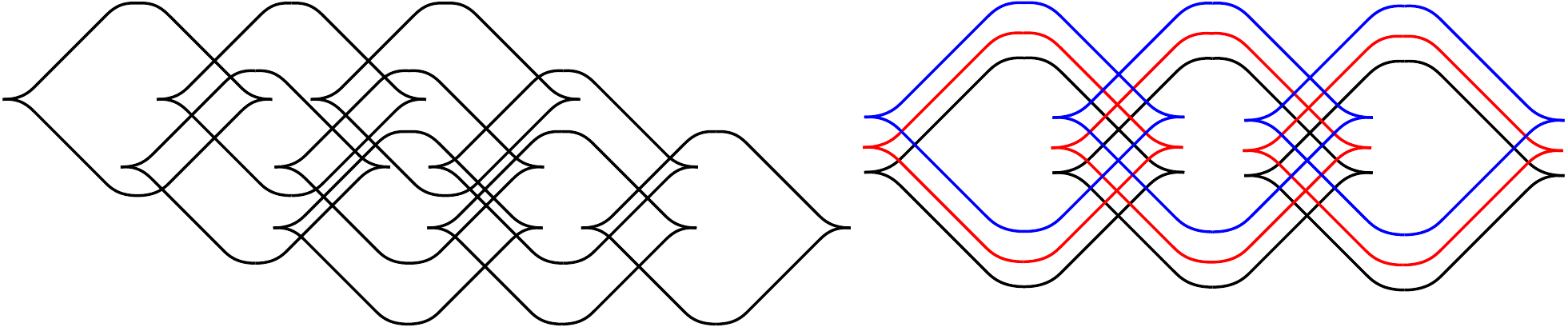}
	\caption{Stein handlebody pictures of $M(4,4,2)$}
	\label{fig:St1}
\end{figure}
\end{proof}

One of the inputs in our algorithm was expressing $M(p,q,r)$ as the $r$-fold branched cover over $B^4$. Similarly we can regard $M(p,q,r)$ as $p$-fold and respectively $q$-fold branched covers over $B^4$. These three descriptions in general yield different Stein handlebody diagrams (see for example figures \ref{fig:St1}, \ref{fig:St2}). It was shown by Kirby and Melvin (\cite{KirMel}) that the diagrams describe diffeomorphic $4$-manifolds in the case $(p,q,r)=(2,3,5)$ by a sequence of explicit handlebody moves. It would be interesting to know whether the same moves can be performed in Stein category.  

		\begin{figure}[h] 
	\includegraphics[width=1.0\textwidth]{./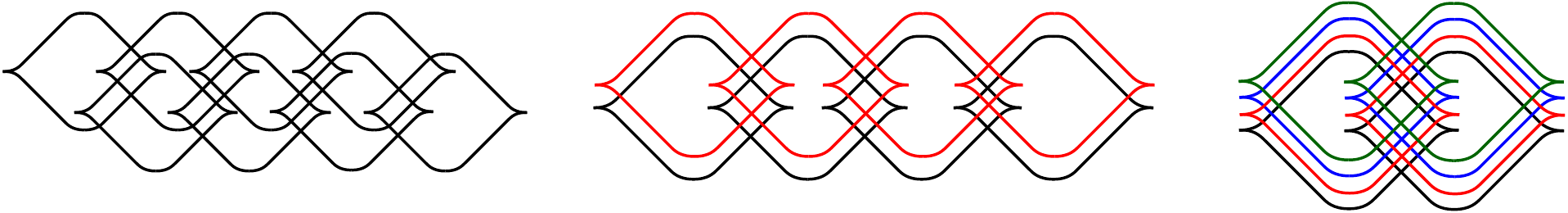}
	\caption{Stein handlebody pictures of $M(2,3,5)$}
	\label{fig:St2}
\end{figure}

\begin{corollary}
$c_1(M(p,q,r))=0$.   
\end{corollary}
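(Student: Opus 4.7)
The plan is to read $c_1$ directly off of the Stein handlebody diagram produced in Theorem~\ref{thm4.1}. Recall Gompf's formula for Stein handlebodies: if a Stein $4$-manifold $X$ is built from $B^4$ by attaching Legendrian $2$-handles along a Legendrian link $K_1 \cup \cdots \cup K_n$, then $c_1(X,J)$ is represented by the cocycle whose value on the homology class $[h_i]$ of the $i$-th $2$-handle is the rotation number $\mathrm{rot}(K_i)$. So to show $c_1(M(p,q,r)) = 0$, it suffices to show that every attaching Legendrian knot has rotation number $0$.

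By Theorem~\ref{thm4.1}, the canonical Stein structure on $M(p,q,r)$ is described by a handlebody diagram in which each of the $(p-1)(q-1)(r-1)$ Legendrian $2$-handles is attached along a \emph{standard} Legendrian unknot. The standard Legendrian unknot in $(S^3, \xi_{\mathrm{std}})$ has Thurston--Bennequin number $\mathrm{tb}=-1$ and rotation number $\mathrm{rot}=0$ (this is the base case of the Eliashberg--Fraser classification \cite{EliFra} used in the proof of Theorem~\ref{thm4.1}). Hence $\mathrm{rot}(\gamma_{i,j}^{k}) = 0$ for every attaching curve.

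Combining the two observations, $c_1(M(p,q,r))$ evaluates to $0$ on each basis element $[h_{i,j}^{k}]$ of $H_2(M(p,q,r))$. Since these classes span, $c_1 = 0$. There is no real obstacle here; the work was already done in Theorem~\ref{thm4.1}, and this corollary is the statement that assembling the diagram out of \emph{standard} Legendrian unknots (rather than stabilized ones) forces the canonical class to vanish.
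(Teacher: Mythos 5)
Your proof is correct and is essentially the same as the paper's: both invoke Gompf's formula identifying $\langle c_1, [h_{i,j}^k]\rangle$ with the rotation number of the attaching circle, and conclude from Theorem \ref{thm4.1} that the standard Legendrian unknots all have rotation number zero, so $c_1$ vanishes on a spanning set of $H_2$. (The paper also sketches an alternative argument via triviality of the holomorphic tangent bundle of smoothings of Gorenstein hypersurface singularities, but that is supplementary.)
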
 
\begin{proof}
From the description above we see that $M(p,q,r)$ has the homotopy type of $\displaystyle \vee_{(p-1)(q-1)(r-1)}S^2$. Each Legendrian $2$-handle corresponds to a generator of $H_2(M(p,q,r);\mathbb{Z})$. According to Gompf (\cite{G2}), the evaluation of the first Chern class at each of these generators is equal to the rotation number of the attaching circle. Since our attaching circles are standardard Legendrian unknots their rotation numbers are all zero.

Alternatively, this corollary follows from the fact that the holomorphic tangent bundles of the smoothings of hypersurface singularities are trivial. Indeed, we know that all hypersurface singularities are Gorenstein (\cite{Durfee}), and Lemma 1.1 in \cite{Durfee} and Lemma 2 in \cite{Seade} prove Conjecture 1.6 in \cite{Durfee}. 
\end{proof}

\begin{theorem}
Suppose $p \leq p', q \leq q'$ and $r \leq r'$ then there exist a Stein embedding of $M(p,q,r)$ inside $M(p',q',r')$.
\end{theorem}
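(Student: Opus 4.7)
The plan is to use the Stein handlebody description furnished by Theorem~\ref{thm4.1}: $M(p,q,r)$ is obtained from $B^4$ by attaching $(p-1)(q-1)(r-1)$ Legendrian 2-handles along standard Legendrian unknots $\gamma_{i,j}^k \subset (S^3,\xi_{\textup{std}})$, indexed by $1 \leq i \leq p-1$, $1 \leq j \leq q-1$, $1 \leq k \leq r-1$, whose pairwise linking numbers and page/contact framings are recorded in Section~\ref{Milnor Fibers of Brieskorn Singularities}. The same theorem applied to $(p',q',r')$ produces an analogous Legendrian link $L_{p',q',r'}$ with indices ranging over $1 \leq i \leq p'-1$, $1 \leq j \leq q'-1$, $1 \leq k \leq r'-1$. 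The crucial observation is that the combinatorial data driving the construction—the standard Legendrian unknot type of each component, its Thurston--Bennequin/page framing, and the pairwise linking numbers listed in Section~\ref{Milnor Fibers of Brieskorn Singularities}—depend only on the relative positions of the indices, never on the ambient parameters $p$, $q$, or $r$.

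The main step is to identify the attaching link $L_{p,q,r}$ of $M(p,q,r)$ with the Legendrian sublink of $L_{p',q',r'}$ cut out by the smaller index range $1 \leq i \leq p-1$, $1 \leq j \leq q-1$, $1 \leq k \leq r-1$. Geometrically, this rests on two natural nestings: first, the Seifert surface of the $(p,q)$-torus link sits inside the Seifert surface of the $(p',q')$-torus link by viewing the positive braid word $(\sigma_1 \cdots \sigma_{p-1})^q$ as a sub-word of $(\sigma_1 \cdots \sigma_{p'-1})^{q'}$; second, the $r-1$ Legendrian push-off levels used to construct $M(p,q,r)$ are the first $r-1$ of the $r'-1$ levels used to construct $M(p',q',r')$. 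Under these inclusions, the vanishing cycle $\gamma_{i,j}^k$ appearing in the smaller construction is literally the same Legendrian unknot as the same-indexed vanishing cycle appearing in the larger construction, and the linking formulas of Section~\ref{Milnor Fibers of Brieskorn Singularities} ensure that the sub-configuration within $L_{p',q',r'}$ matches $L_{p,q,r}$ exactly.

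Once the sublink identification is in place, the conclusion follows from the standard principle that attaching a subset of the Legendrian 2-handles in a Weinstein handlebody produces a Stein subdomain of the result of attaching them all: attaching only the handles of $L_{p,q,r}$ to $B^4$ with its standard Stein structure yields $M(p,q,r)$ by Theorem~\ref{thm4.1}, and then attaching the remaining handles of $L_{p',q',r'} \setminus L_{p,q,r}$ completes the handlebody to $M(p',q',r')$, so the inclusion of the partial handlebody provides the required Stein embedding. The main technical obstacle lies in verifying the sublink identification rigorously at the level of Legendrian data in $(S^3,\xi_{\textup{std}})$—that is, checking that the explicit front-projection realization of the sphere packing used in the proof of Theorem~\ref{thm4.1} for the larger lattice restricts to the analogous realization for the smaller lattice. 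Since this realization is governed entirely by the conventions of Figure~\ref{fig:Pm} applied uniformly to each lattice position, the required compatibility reduces to combinatorial bookkeeping.
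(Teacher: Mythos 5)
Your argument is correct and is essentially the paper's own proof: the paper simply observes that, by the Stein handlebody description of Theorem~\ref{thm4.1}, one obtains $M(p',q',r')$ from $M(p,q,r)$ by attaching additional Stein 2-handles, which is exactly your sublink/sub-handlebody argument spelled out in more detail. Your elaboration of the sub-braid-word nesting and the level-wise inclusion of push-offs is a fuller account of what the paper leaves implicit, but it is not a different route.
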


\begin{proof}
The smooth version of this theorem is well-known (e.g. see \cite{GS}). For the Stein version, our description implies that by adding Stein 2-handles to $M(p,q,r)$ one can get $M(p',q',r')$.
\end{proof}

\begin{remark}
Marco Golla communicated to us an alternative method of proving the above result using the deformation $x^p+y^p+z^p+t(x^{p'}+y^{p'}+z^{p'})$ which gives rise to a Stein cobordism from the canonical contact structure of $x^p+y^p+z^p$ to the canonical contact structure of $x^{p'}+y^{p'}+z^{p'}$ [\cite{EG}, Section 5].
\end{remark}

\section{Minimal Resolutions and Matching of Open Books}
\label{OBs}
Now we focus on the case where $p = q = r$. Let $\mmin(p,p,p)$ denote the space of the minimal resolution of the singularity $x^p+y^p+z^p$. We first discuss the Lefschetz fibration on $\mmin(p,p,p)$. Then we prove that the open books on $\partial M(p,p,p)$ and $\partial \mmin(p,p,p)$ induced by the Lefschetz fibrations are the same (Theorem \ref{thm:match}). First let us prove the following lemma which is needed in the sequel.

\begin{lemma}
	The minimal resolution graph of $x^p+y^p+z^p=0$ consists of a single vertex corresponding to a surface of genus $(p-1)(p-2)/2$ and self intersection $-p$.
\end{lemma}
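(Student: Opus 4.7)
The plan is to exploit the fact that the defining polynomial $x^p+y^p+z^p$ is homogeneous of degree $p$, so $V=\{x^p+y^p+z^p=0\}\subset\mathbb{C}^3$ is the affine cone over the smooth projective curve $C=\{[x:y:z]\in\mathbb{CP}^2:x^p+y^p+z^p=0\}\subset\mathbb{CP}^2$ (smoothness of $C$ is immediate from the Jacobian criterion). I would resolve the singularity by a single blow-up of the ambient space at the origin, $\pi:\widetilde{\mathbb{C}^3}\to\mathbb{C}^3$, with exceptional divisor $E\cong\mathbb{CP}^2$. The strict transform $\widetilde V$ of $V$ is then naturally identified with the total space of the tautological bundle $\mathcal{O}_{\mathbb{CP}^2}(-1)$ restricted to $C$; in particular $\widetilde V$ is smooth because $C$ is smooth, and $\pi|_{\widetilde V}$ is a biholomorphism off the zero section, which equals $\widetilde V \cap E = C$.

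Given this description, the two numerical invariants fall out directly. For the genus, the degree-genus formula for a smooth plane curve of degree $p$ gives $g(C) = (p-1)(p-2)/2$. For the self-intersection, since $\widetilde V \to C$ is realized as the total space of $\mathcal{O}_{\mathbb{CP}^2}(-1)\big|_C$, the normal bundle of $C$ in $\widetilde V$ is exactly $\mathcal{O}_{\mathbb{CP}^2}(-1)\big|_C$, whose degree equals $-\deg(C)=-p$, so $C\cdot C = -p$ inside $\widetilde V$.

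To conclude that this resolution is the minimal one, I would verify that the exceptional curve $C$ is not a smooth rational $(-1)$-curve. For $p\geq 3$ the genus $(p-1)(p-2)/2$ is already positive, which rules out rationality altogether; for $p=2$ the exceptional curve is a $(-2)$-sphere, which is the standard minimal $A_1$ resolution. The only real step requiring a bit of care is the geometric identification of $\widetilde V$ with $\mathcal{O}_{\mathbb{CP}^2}(-1)\big|_C$, which I expect to be the main (though standard) obstacle; this follows from the general fact that blowing up the vertex of an affine cone over a smooth projective variety $C \subset \mathbb{P}^N$ produces the total space of the tautological line bundle restricted to $C$, with $C$ embedded as the zero section.
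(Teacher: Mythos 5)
Your argument is correct, and it takes a genuinely different route from the paper. You exploit the homogeneity of $x^p+y^p+z^p$: the hypersurface is the affine cone over the smooth Fermat curve $C\subset\mathbb{CP}^2$ of degree $p$, and a single blow-up of the origin in $\mathbb{C}^3$ turns the strict transform into the total space of $\mathcal{O}_{\mathbb{CP}^2}(-1)\big|_C$, so the genus comes from the degree--genus formula and the self-intersection from $\deg\bigl(\mathcal{O}(-1)\big|_C\bigr)=-p$; minimality is then settled by noting the exceptional curve is not a rational $(-1)$-curve. The paper instead works with the cyclic-cover presentation $z^p=-(x^p+y^p)$: it first records the resolution graph of the plane curve singularity $x^p+y^p=0$ (one exceptional sphere of multiplicity $p$ meeting the $p$ lines), then applies the algorithm of N\'emethi's book for $p$-fold covers of such graphs, so the exceptional sphere lifts to an irreducible curve of self-intersection $-p$ whose genus is found by an Euler characteristic count of the branched cover of $S^2$ over $p$ points. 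Your approach is more self-contained on the algebro-geometric side and makes the minimality check explicit (the paper leaves it implicit, relying on the genus being positive for $p\geq 3$), while the paper's branched-cover viewpoint is chosen because it recurs throughout the article -- the same covering picture is reused later to identify the horizontal open book on the boundary -- so the two computations reinforce each other there. Both are complete proofs of the lemma.
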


\begin{proof}
We apply the algorithm described in Chapter III, Appendix 1 of \cite{Nem} to get the resolution graph of $x^p+y^p+z^p$. First notice that the resolution graph of the plane curve singularity $x^p+y^p=0$ is given by Figure \ref{ResG}.
	\begin{figure}[ht]
		\includegraphics[width=0.30\textwidth]{./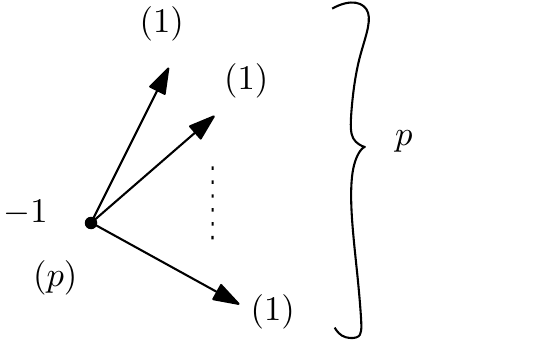}
		\caption{Resolution graph of $x^p+y^p=0$}
		\label{ResG}
	\end{figure}
	This is because $x^p+y^p$ can be factorized into $p$ linear factors each of which describe a line intersecting at the origin. A single blow-up at the origin separates these lines. The multiplicity of the exceptional divisor is $p$. Each arrowhead corresponds to the proper transform of a line (having multiplicity 1). Now take the $p$-fold brached cover branched along the lines, the exceptional sphere lifts to a closed surface self intersection $-p$. A simple Euler characteristic computation shows that the genus of this surface is as we claimed. 
\end{proof}

By a theorem of Bogomolov and  de Oliveira \cite{BO}, there is a deformation of the holomorphic structure on $\mmin(p,p,p)$ which gives a Stein filling of the canonical contact structure on the link. Moreover $\mmin(p,p,p)$ contains a symplectic surface of genus $g=(p-1)(p-2)/2$ with self intersection $-p$. The following result says that if another symplectic 4-manifold contains such a symplectic surface then a neighborhood of this surface is symplectomorphic to $\mmin(p,p,p)$ and it is possible to determine the vanishing cycles of a compatible Lefschetz fibration. 

\begin{proposition}(\cite{GayMark, ParkStip})
Suppose a symplectic 4-manifold $X$ contains a symplectic surface $\Sigma$ of genus $g=(p-1)(p-2)/2$ with self intersection $-p$. Then every neighborhood of $\Sigma$ contains a sub-neighborhood $N(\Sigma)$ with strongly convex boundary, that admits a symplectic Lefschetz fibration $\pi: N(\Sigma) \rightarrow D^2$ with $p$ vanishing cycles where the fibers are genus $g$ with $p$ boundary components. The monodromy is the product of right handed Dehn twists at each boundary component. The contact structures induced from the symplectic filling and the open book are both the same as the canonical contact structure. 
\label{GM-PS}
\end{proposition}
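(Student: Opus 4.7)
The plan is to reduce to a standard local model via the symplectic neighborhood theorem, construct an explicit positive allowable Lefschetz fibration (PALF) on that model, and then match the boundary contact structures with the canonical one by invoking the uniqueness of horizontal open books from Theorem \ref{CNP thm}.

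First, by the symplectic neighborhood theorem applied to symplectic submanifolds, any sufficiently small tubular neighborhood of $\Sigma$ in $X$ is symplectomorphic to a neighborhood of the zero section in its normal disk bundle $D \to \Sigma$, a complex line bundle of Euler number $-p$. After shrinking, we take $N(\Sigma)$ to be such a disk bundle model equipped with its standard fibered symplectic structure, so the question becomes purely about the local model $D$.

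Next, I would build the PALF on $D$ explicitly. Decompose the base as $\Sigma = F \cup_{\partial}(D_1 \sqcup \cdots \sqcup D_p)$ where $F$ is a genus $g$ surface with $p$ boundary components. The normal bundle trivializes over $F$ and over each $D_i$; distributing the Euler number $-p$ evenly, the clutching over each $\partial D_i$ contributes $-1$. Consequently $D$ is obtained from $F \times D^2$ by attaching $p$ 2-handles along the boundary-parallel curves $\gamma_i = \partial_i F$, placed on distinct pages and with framing $-1$ relative to the page framing. This is exactly the Kirby picture of a PALF over $D^2$ with regular fiber $F$, vanishing cycles $\gamma_1, \ldots, \gamma_p$, and monodromy factorization $\phi = \prod_{i=1}^{p} \tau_{\gamma_i}$, the product of right-handed boundary Dehn twists (which commute, since the $\gamma_i$ are disjoint). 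By the Akbulut--Ozbagci/Loi--Piergallini correspondence, this PALF deforms the symplectic form on $D$ to a Stein form with strongly convex boundary, and the induced contact structure on $\partial N(\Sigma)$ is supported by the open book with page $F$ and monodromy $\phi$.

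Finally, to identify both the filling-induced and open book-induced contact structures with the canonical contact structure $\xi_{\mathrm{can}}$, I would use horizontal open books. By the preceding lemma, $\partial N(\Sigma)$, as the circle bundle of Euler number $-p$ over $\Sigma$, is diffeomorphic to the Brieskorn link $\Sigma(p,p,p)$ of $x^p+y^p+z^p=0$. The open book $(F,\phi)$ is horizontal with respect to the Seifert fibration and has binding vector $(p)$ at the single vertex of the minimal resolution graph. The Milnor open book on $\Sigma(p,p,p)$ induced by the projection $z$ is also horizontal with binding vector $(p)$, since its binding $\{x^p+y^p=0\}\cap S^3$ is the $(p,p)$-torus link, which has $p$ components. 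By Theorem \ref{CNP thm}, these two horizontal open books are isomorphic, so they support the same contact structure. Since Milnor open books support the canonical contact structure, both the open book structure and the Stein filling induce $\xi_{\mathrm{can}}$ on $\partial N(\Sigma)$.

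The main technical point I expect is verifying that the constructed open book is horizontal with binding vector precisely $(p)$; this reduces to recognizing that the $p$ binding circles produced by the boundary Dehn twists correspond to $p$ generic fibers of the Seifert fibration of $\partial N(\Sigma)$ over $\Sigma$.
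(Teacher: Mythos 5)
Your overall skeleton (reduce to the disk-bundle model, recognize the Lefschetz fibration with fiber $\Sigma_g^p$ and monodromy $t_{\delta_1}\cdots t_{\delta_p}$ as the disk bundle of Euler number $-p$, then match contact structures via horizontal open books and Theorem \ref{CNP thm}) is reasonable, and your last step is essentially the argument the paper gives later in Theorem \ref{thm:match}. But there is a genuine gap at the crucial symplectic step. The proposition asserts that the boundary of $N(\Sigma)$ is strongly convex and the Lefschetz fibration is symplectic \emph{for the restriction of the ambient form} $\omega|_{N(\Sigma)}$ -- this is exactly what is needed later, when $N(\Sigma)$ is cut out of $X$ along that convex boundary. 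Your argument instead invokes the Akbulut--Ozbagci/Loi--Piergallini correspondence to ``deform the symplectic form on $D$ to a Stein form with strongly convex boundary.'' Producing \emph{some} Stein structure on the abstract PALF is not enough: to transfer it back to the ambient neighborhood you would have to identify a neighborhood of $\Sigma$ in $X$ symplectically with a neighborhood of the corresponding closed surface inside that Stein model via the symplectic neighborhood theorem, and for that you need (i) the capped-off closed surface to sit in the model as a \emph{symplectic} surface -- which is unclear, since the $p$ Lefschetz thimbles capping the boundary-parallel vanishing cycles are Lagrangian, not symplectic, in the Stein picture -- and (ii) the symplectic area of $\Sigma$, a free parameter of the ambient neighborhood, to be matched. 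Arranging a symplectic structure on this Lefschetz-fibration model that has convex boundary, contains the closed genus-$g$ surface symplectically with prescribed area, and makes the fibration symplectic is precisely the nontrivial content of Theorem 1.1 of \cite{GayMark}, which the paper's proof simply quotes; your appeal to a deformation both changes the form (not allowed for the cut-and-paste application) and leaves the identification with the ambient structure unjustified.

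For the record, the paper's own proof is only a citation: the convex sub-neighborhood and the symplectic Lefschetz fibration with $p$ boundary-parallel vanishing cycles come from \cite{GayMark}, and the identification of the induced contact structure with the canonical one from \cite{ParkStip}. Your horizontal-open-book argument for the last statement (binding vector $(p)$ on the circle bundle, plus Theorem \ref{CNP thm} and the fact that Milnor open books support the canonical structure) is a legitimate alternative to citing \cite{ParkStip} and mirrors the proof of Theorem \ref{thm:match}; if you repair the first step by quoting or reproving the Gay--Mark construction rather than the Stein deformation, the proof goes through.
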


\begin{proof}
	This proposition is a special case of Theorem 1.1 in \cite{GayMark}. The last statement follows from \cite{ParkStip}.
		\begin{figure}[h] 
	\includegraphics[width=0.5\textwidth]{./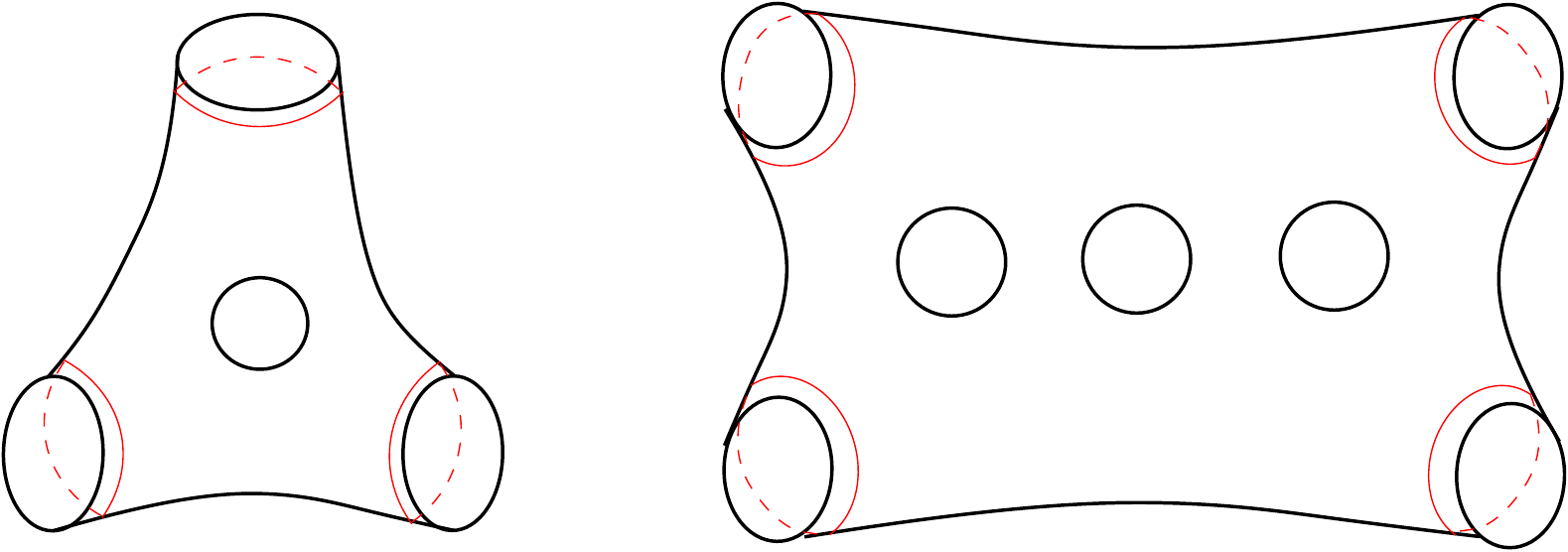}
	\caption{$p=3$ and $p=4$ cases}
	\label{}
\end{figure}

\end{proof}

\begin{theorem}
The open books on $\partial \mmin(p,p,p)$ and $\partial M(p,p,p)$ induced by the Lefschetz fibrations constructed in Proposition \ref{GM-PS} and respectively Section 3 are the same.
\label{thm:match}
\end{theorem}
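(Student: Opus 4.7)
The plan is to apply Theorem \ref{CNP thm}. Since both open books are supported on the same 3-manifold $\Sigma(p,p,p) = \partial M(p,p,p) = \partial \mmin(p,p,p)$, and I will verify that both are horizontal with respect to the plumbing coming from the minimal resolution, it suffices to check that they have the same binding vector. By the preceding lemma the minimal resolution graph consists of a single vertex carrying a genus $g=(p-1)(p-2)/2$ surface of self-intersection $-p$; a single irreducible exceptional divisor is trivially normal crossing, so this minimal resolution is also good, and the binding vector is just one nonnegative integer, namely the total number of binding components.

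First I would check horizontality on both sides. The Section~\ref{Milnor Fibers of Brieskorn Singularities} open book can be identified with the Milnor open book on $\Sigma(p,p,p)$ associated to the function $h(x,y,z)=x^p+y^p$ restricted to the singular hypersurface $\{x^p+y^p+z^p=0\}$: this is precisely the function whose regular fibers give the pages $M(p,p)$ used to build the Lefschetz fibration in Section~\ref{Milnor Fibers of Brieskorn Singularities} via the $p$-fold cyclic branched cover of $B^4$ branched along $M(p,p)$. Milnor open books are horizontal on the minimal good resolution, as recorded in the preliminaries. The open book produced by Proposition~\ref{GM-PS} is horizontal essentially by construction: it is built inside a symplectic disk-bundle neighborhood of the $-p$ surface, with pages transverse to the $S^1$-fibers and each binding component realized as one such fiber.

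Next I would count binding components. For the Section~\ref{Milnor Fibers of Brieskorn Singularities} Milnor open book the binding equals
\[
\{x^p+y^p+z^p=0,\ x^p+y^p=0\}\cap S^5 \;=\; \{z=0,\ x^p+y^p=0\}\cap S^5,
\]
and since $x^p+y^p$ factors into $p$ distinct complex linear factors through the origin, this is a disjoint union of $p$ lines intersected with $S^5$, hence $p$ circles. For the Proposition~\ref{GM-PS} open book, the same proposition asserts directly that the pages have $p$ boundary components, giving $p$ binding components. Both binding vectors are therefore $(p)$, so Theorem~\ref{CNP thm} identifies the two open books.

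The main obstacle is rigorously justifying, on the Section~\ref{Milnor Fibers of Brieskorn Singularities} side, the identification of the open book induced on $\partial M(p,p,p)$ by the Lefschetz fibration with the Milnor open book of $h=x^p+y^p$ on $\{x^p+y^p+z^p=0\}$; this requires unwinding how the $p$-fold cyclic branched cover of $B^4$ along the Milnor fiber $M(p,p)$ intertwines the $(p,p)$-torus link open book on $S^3$ with the Milnor fibration on $\Sigma(p,p,p)$. A secondary point is checking that the induced orientations of the binding components agree with the Seifert fiber orientations over the single vertex of the resolution graph, but this is automatic since both constructions are holomorphic or symplectic and therefore orient the binding consistently with the fiber orientation of the circle bundle.
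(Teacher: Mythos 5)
Your overall strategy is the same as the paper's (apply Theorem \ref{CNP thm} after showing both open books are horizontal with binding vector $(p)$), but the step you yourself flag as the ``main obstacle'' is precisely the content of the paper's proof, and your proposed way of filling it is not quite right. You want to identify the boundary open book of the Section 3 Lefschetz fibration with the Milnor open book of $h=x^p+y^p$ restricted to $V=\{x^p+y^p+z^p=0\}$. But on $V$ one has $h=-z^p$, so the fibration $\arg(h)$ on $\Sigma(p,p,p)\setminus L$ is the composition of the fibration $\arg(z)$ with the $p$-fold self-covering of $S^1$; its fibers are disjoint unions of $p$ pages of the $z$-open book, i.e.\ the Milnor open book of $h$ has disconnected pages ($p$ copies of $M(p,p)$). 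The open book induced by the PALF of Section 3, by contrast, has connected page $M(p,p)$ and monodromy $\phi_{p,p}^{p}$. The correct comparison is with the Milnor open book of the coordinate function $f=z$ (same binding $\{z=0\}\cap\Sigma(p,p,p)$, hence still $p$ components), and even then the identification of the PALF boundary open book with that Milnor open book is exactly what has to be argued, not assumed. The paper supplies this missing argument directly and differently: it observes that the $(p,p)$-torus link is a union of $p$ Hopf fibers, takes the $p$-fold cover of $S^3$ branched along it in an $S^1$-equivariant way so that the Hopf fibration lifts to the Seifert fibration of $\Sigma(p,p,p)$ (Euler number $-p$ over the genus $g$ surface), and concludes that the lifted pages are transverse to the circle fibers and the binding consists of $p$ fibers, i.e.\ the open book is horizontal with binding vector $(p)$, the monodromy being the $p$-th power of the torus link monodromy.

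A secondary gap is on the $\mmin(p,p,p)$ side: Proposition \ref{GM-PS} does not assert that the pages of its open book on the boundary are transverse to the fibers of the circle bundle, so horizontality is not ``by construction''; the paper instead invokes the explicit computation in \cite{EtgOzb} showing that this open book is horizontal with binding vector $(p)$. With the function $z$ in place of $h$, a proof that the PALF boundary open book is that Milnor open book (or the paper's equivariant branched-cover argument), and a citation or argument for the resolution side, your outline closes up; as written, the two key horizontality/identification claims are left unestablished.
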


\begin{proof}
By Theorem \ref{CNP thm} it suffices to show that both open books are horizontal with the same binding vector $\mathbf{n} = (p)$. For $\partial \mmin(p,p,p)$, this was explicitly shown in \cite{EtgOzb}. The result for $\partial M(p,p,p)$ is a consequence of branched cover description as follows. When we resolve the plane curve singularity $x^p+y^p =0$ we noticed that the link is a union of $p$ copies of the fibers of the Hopf fibration on $S^3$ (see Figure \ref{ResG} where each arrow represents a component of the link). When we take $p$-fold branched cover along the link, the Hopf fibration on $S^3$ gives rise to $S^1$ bundle over $\Sigma_g$ with Euler number $-p$ (where $g = (p-1)(p-2)/2$). To see the $S^1$ action upstairs, notice that $p$ distinct fibers in the branch locus descend to $p$ distinct points in the quotient $S^3 / S^1$ which is diffeomorphic to $S^2$. The branch cover of $S^2$ branched along $p$ distinct points is $\Sigma_g$. Clearly, we can take the branched cover in an $S^1$ equivariant way so that each $S^1$ fiber except those in the branch locus, lifts to $p$ distinct $S^1$ fibers. Moreover, the binding remains to be $p$ copies of the fiber. Hence the open book is horizontal with binding vector $\mathbf{n} = (p)$. In the level of open books, the branched cover description above corresponds to taking the $p$-th power of the monodromy of the $(p,p)$ torus link in $S^3$ whose page is a surface of genus $g$ with $p$ boundary components.

\end{proof}

\section{Generalized Chain Surgeries}
\label{Main Sec}
Using the ideas in the previous sections we define our symplectic surgery operation which we call Generalized Chain Surgeries (GC Surgeries in short) as follows.
\begin{definition}
Fix an integer $p \geq 3$. Suppose $X$ is a closed symplectic 4-manifold which contains a symplectic surface $\Sigma$ of genus $g = (p-1)(p-2)/2$ with self intersection $-p$. By Proposition \ref{GM-PS}, a neighborhood $N(\Sigma)$ of $\Sigma$ in $X$ is symplectomorphic to the minimal resolution of the singularity $x^p+y^p+z^p=0$ so that $N(\Sigma)$ forms a strong symplectic filling for the canonical contact structure on the link of the singularity. We remove $N(\Sigma)$ and glue in the Milnor fiber $M(p,p,p)$. The resulting manifold 
\begin{equation*}
X' = (X \setminus N(\Sigma)) \cup M(p,p,p)
\end{equation*}

\noindent admits a symplectic structure extending the one on the complement of $\Sigma$ in $X$. We call the operation $X  \dashrightarrow X'$ {\em Generalized Chain Blow-up (GC Blow-up)} and the reverse operation $X'  \dashrightarrow X$ {\em Generalized Chain Blow-down (GC Blow-down)}. 
\end{definition}

\begin{remark}
Note that the calculation of $b_2$ of the Milnor fiber $M(p,p,p)$ directly follows from Theorem \ref{thm4.1}, and that $b_2$ is larger for $M(p,p,p)$; hence the name GC Blow-up.
\end{remark}

Let us state a result which is an immediate consequence of our technique.
\begin{corollary}
For every $p \geq 3$, the canonical contact structure of the singularity $x^p+y^p+z^p$ has at least $2(p-2)$ distinct Stein fillings.
\end{corollary}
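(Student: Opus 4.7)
My plan is to exhibit a concrete list of $2(p-2)$ pairwise non-diffeomorphic Stein fillings of the canonical contact structure on $\Sigma(p,p,p)$ and to distinguish them by their second Betti numbers.

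Two fillings come for free from the constructions of the preceding sections: the minimal resolution $\mmin(p,p,p)$, which has $b_2 = 1$ (a single curve of genus $g=(p-1)(p-2)/2$ and self-intersection $-p$), and the Milnor fiber $M(p,p,p)$, which has $b_2 = (p-1)^3$ by Theorem \ref{thm4.1}. These are non-diffeomorphic for every $p \geq 3$, and they already handle the case $p = 3$, where $2(p-2) = 2$.

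For $p \geq 4$, I would produce the remaining $2(p-2)-2 = 2p-6$ fillings via partial monodromy substitutions interpolating between the two Lefschetz fibrations above. The fibration on $M(p,p,p)$ from Section \ref{Milnor Fibers of Brieskorn Singularities} expresses the boundary monodromy as $\phi_{p,p}^{p}$ on the genus-$g$ surface with $p$ boundary components, while the fibration on $\mmin(p,p,p)$ from Proposition \ref{GM-PS} expresses the same mapping class as $t_{\delta_1}\cdots t_{\delta_p}$, a single product of boundary-parallel Dehn twists. The generalized chain relation of Section \ref{Main Sec} equates these two positive words. Applying this relation to only a sub-collection of the $p$ copies of $\phi_{p,p}$ inside $\phi_{p,p}^{p}$ yields a family of intermediate positive factorizations of the boundary mapping class; each one, via the Loi-Piergallini / Akbulut-Ozbagci correspondence, produces a Stein filling of $(\Sigma(p,p,p), \xi_{\mathrm{can}})$. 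Since each substitution strictly changes the number of vanishing cycles by a fixed amount, distinct substitutions give distinct values of $b_2$, hence pairwise non-diffeomorphic fillings.

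The main obstacle is the combinatorial bookkeeping: one must verify that these partial substitutions produce exactly $2(p-2)$ substantively distinct fillings, rather than $p-1$ or some other nearby integer. The factor $p-2$ corresponds to the number of genuinely intermediate substitution counts, while the factor of $2$ should encode a symmetry in the generalized chain relation (for instance, the two natural ways of grouping blocks of $\phi_{p,p}$ or equivalently the two symmetric choices of subword where a partial substitution can be first applied). Once this combinatorial check is carried out, the distinctness of the resulting Betti numbers gives the claimed lower bound $2(p-2)$ immediately.
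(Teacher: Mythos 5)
Your first two fillings (the minimal resolution and the Milnor fiber, distinguished by $b_2=1$ versus $b_2=(p-1)^3$) are fine and settle $p=3$, but the mechanism you propose for the remaining $2p-6$ fillings does not exist. The generalized chain relation \eqref{eq:rel} equates the \emph{entire} word $w^p$ with the boundary word $w_0$; it cannot be ``applied to only a sub-collection of the $p$ copies of $\phi_{p,p}$,'' because there is no relation expressing $\phi_{p,p}^k$ for $0<k<p$ as any other positive word. A monodromy substitution only yields a Stein filling of the \emph{same} contact structure if one replaces a subword by a genuinely equal mapping class (so that the boundary open book is unchanged); your ``intermediate positive factorizations'' are not factorizations of the boundary monodromy at all, so the Loi--Piergallini/Akbulut--Ozbagci correspondence gives you nothing here. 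The acknowledged ``combinatorial bookkeeping'' is therefore not the main obstacle -- the move itself is undefined.

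What actually produces the additional fillings (and is the paper's argument) is to vary the \emph{size} of the singularity rather than the exponent: by the Stein embedding result of Section \ref{Handlebody diagrams}, for every $3\le p'\le p$ there is a Stein embedding $M(p',p',p')\subset M(p,p,p)$, and one replaces this piece by $\mmin(p',p',p')$, i.e.\ performs a GC blow-down on the embedded copy. In monodromy language this amounts to substituting the relation \eqref{eq:rel} for the smaller parameter $p'$, applied along a $(p'-1)\times(p'-1)$ sub-packing of the $(p-1)\times(p-1)$ packing -- an honest relation on a subsurface $\Sigma_{g'}^{p'}$ of the fiber -- not a partial application of the relation for $p$. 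The resulting fillings are distinguished by $b_2=(p-1)^3-(p'-1)^3+1$, together with $M(p,p,p)$ itself. If you rebuild your argument this way you should also check carefully how many pairwise distinct fillings the construction really yields and compare with the claimed bound $2(p-2)$, since each admissible $p'$ contributes one replacement filling.
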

\begin{proof}
For every $3 \leq p' \leq p$, we can embed $M(p',p',p')$ into $M(p,p,p)$ and replace it by $\mmin (p',p',p')$. We can distinguish these fillings by their $b_2$.
\end{proof}
The above number of fillings is not optimal. When $p$ is large, one can embed different Milnor fibers $M(p',p',p')$ into $M(p,p,p)$ and get different fillings. In our case we only see finite number of Stein filings. On the other hand, in \cite{OO}, Ohta Ono prove that there are infinitely many Stein fillings of these canonical contact structures.

Now, let us discuss an equivalent formulation of GC blow-ups and blow-downs using monodromy substitutions of Lefschetz fibrations. Let $\Sigma_g^p$ be a surface with genus $g= (p-1)(p-2)/2$ with $p$ boundary components. Inside $\Sigma_g^p$ one can find a system $\Gamma$ of simple closed curves

\begin{align*} 
\Gamma =  \{&\gamma_{1,1}, \cdots , \gamma_{1,p-1}, \\
 & \gamma_{2,1}, \cdots , \gamma_{2,p-1}, \\
&\vdots\\
&\gamma_{p-1,1}, \cdots , \gamma_{p-1,p-1}\},
\end{align*} 

which intersect according to the $(p-1)\times (p-1)$ hexagonal sphere packing grid as in Figure \ref{fig:pack2}. From the grid we see the following intersection pattern.
\begin{align*} 
\gamma_{i,j} \cap \gamma_{i+1, j} =1, \;\; \text{for all } i= 1, \cdots, p-2, \text{ for all } j= 1, \cdots, p-1, \\
\gamma_{i,j} \cap \gamma_{i, j+1} =1, \;\; \text{for all } i= 1, \cdots, p-1, \text{ for all } j= 1, \cdots, p-2, \\
\gamma_{i,j} \cap \gamma_{i+1, j+1} =-1,\;\;  \text{for all } i= 1, \cdots, p-2, \text{ for all } j= 1, \cdots, p-2. \\
\end{align*} 
All the other intersections are zero. We call this system $\Gamma$ a $(p-1) \times (p-1)$ packing. An explicit realization of a $(p-1) \times (p-1)$ packing can easily be seen in the quasipositive Seifert surface of the $(p,p)$ torus link as in Figure \ref{fig:PALF2} where $p=3$ case is depicted. The surface $\Sigma_g^p$ deformation retracts onto $\bigcup \Gamma$. 

Conversely if one takes an arbitrary surface $F$ containing a system $\widetilde \Gamma$ of simple closed curves which forms a $(p-1) \times (p-1)$ packing then a regular neighborhood of $\widetilde \Gamma$ inside $F$ is homeomorphic to the surface $\Sigma_g^p$.

Let $t_{i,j}$ denote the right handed Dehn twist around $\gamma_{i,j}$ in $\Sigma_g^p$. Consider the word 
\begin{equation}
w= (t_{1,1}  \cdots  t_{1,p-1})(t_{2,1} \cdots t_{2,p-1}) \cdots (t_{p-1,1} \cdots t_{p-1, p-1}).
\end{equation}
Note that the Lefschetz fibration over $B^2$ with fibers $\Sigma_g^p$ corresponding to the word $w$ has total space $B^4$ and the induced open book on $S^3$ has binding the $(p,p)$ torus link. The word $w^p$ gives a Lefschetz fibration on $M(p,p,p)$ by Section 3. 
 
Let $t_{\delta_1}, \cdots, t_{\delta_p}$ denote the Dehn twists about the boundary parallel curves $\delta_1, \cdots \delta_p$ in each boundary component. The Lefschetz fibration corresponding to the word 
\begin{equation}
w_0 = t_{\delta_1} \cdots t_{\delta_p}
\end{equation}
is the one on $\mmin(p,p,p)$. Hence by  Theorem \ref{thm:match}, the open books match, in particular the monodromies agree and we have the following. 

\begin{theorem}
Suppose $\Gamma = \gamma_{i,j}$ is a system of simple closed curves in $\Sigma_g^p$ forming a $(p-1) \times (p-1)$ packing. Then the following relation holds in the mapping class group
\begin{align}\label{eq:rel}
\left ( \prod_{i=1}^{p-1} \prod_{j=1}^{p-1} t_{i,j}\right )^p = \prod_{k=1}^p \delta_k
\end{align}
\noindent where $t_{i,j}$ is the right handed Dehn twist about the simple closed curve $\gamma_{i,j}$ and $\delta_k$ is the curve parallel to $k$th boundary component.
\end{theorem}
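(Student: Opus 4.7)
The plan is to obtain relation \eqref{eq:rel} as an immediate consequence of the matching of boundary open books established in Theorem \ref{thm:match}. Both sides of \eqref{eq:rel} arise as the monodromies of Lefschetz fibrations on 4-manifolds with page $\Sigma_g^p$, and these two Lefschetz fibrations induce the same open book on their common boundary $\Sigma(p,p,p)$.

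First, I would identify each side of \eqref{eq:rel} as a specific monodromy factorization. By the construction in Section \ref{Milnor Fibers of Brieskorn Singularities}, the Milnor fiber $M(p,p,p)$ is the total space of a PALF with page $\Sigma_g^p$ and monodromy $w^p=\bigl(\prod_{i,j} t_{i,j}\bigr)^p$, where the curves $\gamma_{i,j}$ realize the $(p-1)\times(p-1)$ packing $\Gamma$ inside the quasipositive Seifert surface of the $(p,p)$ torus link. On the other side, Proposition \ref{GM-PS} provides a Lefschetz fibration on $\mmin(p,p,p)$ with the same page $\Sigma_g^p$ and monodromy $w_0=\prod_{k=1}^{p} t_{\delta_k}$, namely the product of boundary-parallel right handed Dehn twists.

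Next, I would invoke the standard correspondence that the monodromy of a Lefschetz fibration over $D^2$, viewed as an element of the mapping class group of the page rel boundary, is precisely the monodromy of the open book induced on the boundary of the total space. Theorem \ref{thm:match} asserts that the two induced open books on $\partial M(p,p,p)=\Sigma(p,p,p)=\partial \mmin(p,p,p)$ coincide (both are horizontal with binding vector $\mathbf{n}=(p)$, and Theorem \ref{CNP thm} identifies them). Consequently, $w^p$ and $w_0$ represent the same element of the mapping class group of $\Sigma_g^p$ rel $\partial\Sigma_g^p$, which is exactly the relation \eqref{eq:rel}.

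The one point that requires care is choosing the identifications of the two pages so that the isomorphism of open books in Theorem \ref{thm:match} becomes the identity on the abstract surface $\Sigma_g^p$, with the curves $\gamma_{i,j}$ and $\delta_k$ in their prescribed positions. This amounts to the observations that $\Sigma_g^p$ deformation retracts onto $\bigcup \Gamma$ and that the embedding of $\Gamma$ produced by the $p$-fold branched covering construction of Section \ref{Milnor Fibers of Brieskorn Singularities} agrees, up to ambient diffeomorphism, with the standard $(p-1)\times(p-1)$ packing used to define $w$. Once this bookkeeping is done, the equality of the two monodromy classes is literally the relation \eqref{eq:rel}.
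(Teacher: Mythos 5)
Your proposal is correct and follows essentially the same route as the paper: the paper derives the relation by observing that $w^p$ and $w_0$ are the monodromies of the Lefschetz fibrations on $M(p,p,p)$ and $\mmin(p,p,p)$, and that by Theorem \ref{thm:match} the induced boundary open books coincide, so the monodromies agree in the mapping class group of $\Sigma_g^p$. Your extra care about identifying the pages (via the fact that a regular neighborhood of any $(p-1)\times(p-1)$ packing is $\Sigma_g^p$, and that boundary twists are insensitive to this identification) is a reasonable elaboration of a point the paper treats implicitly.
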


\begin{remark}
We verified this identity using contact geometry and singularity theory. It would be interesting to see a proof using only mapping class group techniques.
\end{remark}

Let $X$ be a symplectic 4-manifold which admits a compatible Lefschetz fibration with fiber $F$. Suppose in $X$ there is a system $\widetilde \Gamma$ of circles which is a $(p-1) \times (p-1)$ packing. A neighborhood of $\widetilde \Gamma$ in $F$ is homeomorphic to $\Sigma_g^p$. Assume moreover that the monodromy of $X$ contains a word $w^p$. Then GC blow-down of $X$ corresponds to substituting $w_0$ in place of $w^p$. Conversely, if one finds a subsurface $\Sigma_g^p$ of the fiber $F$ and product of Dehn twists along the boundary of $\Sigma_g^p$ appears in the monodromy of $X$, then one can GC blow-up $X$ by substituting $w^p$ in place of $w_0$.

\section{Description as a Symplectic Sum}
\label{Symp Sum}
Using the ideas in the previous section, we will apply our symplectic surgery operation in some complex surfaces. Throughout let $g(x,y) =x^p+y^p$. The projective curve  $\Phi$ defined by the equation $g(x,y)=0$ is singular at one point $p_0$ in $\mathbb{CP}^2$. Let $\pi: X \to \mathbb{CP}^2$ denote the $p$-fold branched cover of $\mathbb{CP}^2$ with branch locus $\Phi$. The surface $X$ has an isolated singularity at $\pi^{-1}(p_0)$. Let $B$ be a small ball around $p_0$ and remove $\pi^{-1}(B)$ from $X$. Denote the resulting manifold $X'$. The boundary of $X'$ is $p$-fold branched cover of $S^3$ where the branch locus is the $(p,p)$ torus link which is the link of the singularity $g(x,y)=0$. 

The manifold $X'$ has a symplectic structure with concave boundary $\partial X'$ which is the link manifold of the surface singularity $z^p = g(x,y)$  equipped with the canonical contact structure. Then we can get two smooth symplectic manifolds $\xres$ and $\xmf$ by filling in the minimal resolution $\mmin$ and respectively the Milnor fiber $M(p,p,p)$ of the singularity $z^p = g(x,y)$. 

The passage $\xmf \dashrightarrow \xres$ can be seen as the removal of a symplectically embedded Milnor fiber and replacing it by the neighborhood of a symplectic surface of genus $(p-1)(p-2)/2$ with self intersection $-p$. In this sense, this operation is GC blowdown. Symbolically we have
\begin{equation*}
\xres = (\xmf \setminus M(p,p,p)) \cup \mmin.
\end{equation*}
Another interesting observation is that $\xmf$ can be seen as $p$-fold branch cover of $\mathbb{CP}^2$ branched along (the projectivization of) nonsingular curve $x^p+y^p = \epsilon$. This could be useful in detecting the diffeomorphism type of $\xmf$. As explained in \cite{GS} p.235, such a branched cover is diffeomorphic to the degree $p$ hypersurface $S_p$ in $\mathbb{CP}^3$. Hence we see $M(p,p,p)$ is embedded inside $S_p$ as a complex submanifold.

As an example, let us discuss the family of hypersurfaces $S_p$ of degree $p$ in $\mathbb{CP}^3$, which will be used as a building block in our construction. Let us recall that the hypersurface $S_p$ is a smooth, simply connected, complex surface and can be obtained as branched cover over $\mathbb{CP}^2$ branched along the divisor $pH$ \cite{Mandelbaum}.  For a degree $d$ cyclic branched cover $X \rightarrow Y$ with branch locus $B$, one has the following formulas:
\begin{equation*}
e(X) = de(Y) - (d-1)e(B), \quad \sigma(X) = d \sigma (Y) - ((d^2-1)/3d) B^2. 
\end{equation*}

By applying these formulas for $S_p$, we have
\begin{equation*}
e(S_p) = 3p - (p-1)(2 - 2(p-1)(p-2)/2) = 3p - (p-1)(-p^2 +3p) = p^3 -4p^2 + 6p,
\end{equation*}
\begin{equation*}
\sigma(S_p) = p - ((p^2-1)/3p) p^2 = p - (p+1)(p-1)p/3. 
\end{equation*}

Moreover, $S_p \# p (\barCP2)$ admits a Lefschetz fibration over $S^2$. For the sake of completeness, let us review the construction of this Lefschetz fibration and provide the details, which will be helpful to the reader. We thank the referee for some suggestions along these lines. The Lefschetz pencil structure on $S_{p}$ comes from pulling back the standard pencil of lines on $\mathbb{CP}^2$ via the branched covering. Since the generic fiber $F$ of this pencil is the branched cover of the line in $\mathbb{CP}^2$ branched over $p$ points, the fiber genus can be computed using Riemann-Hurwitz formula:
\begin{equation*}
2 - 2g(F) = p (e(S^2)) - p(p-1) = -p^2 + 3p, \;\mathrm{thus} \; g(F) = (p-1)(p-2)/2.
\end{equation*}

Notice that one base point of the pencil of lines in $\mathbb{CP}^2$ pulls back to $p$ base points in $S_p$, which one can blow-up to get the fibration $S_p \# p (\barCP2)$. Using this description of the pencil, we can compute the
number of vanishing cycles of the resulting genus $(p-1)(p-2)/2$ Lefschetz fibration on $S_p \# p (\barCP2)$, either by determining the number of tangencies of the degree $p$ curve with the linear fibers of the pencil of lines $\mathbb{CP}^2$, or by comparing the Euler characteristic of  $S_p \# p (\barCP2)$ with the Euler characteristic of the $F \times S^2$:

\begin{equation*}
e(S_p \# p (\barCP2)) - e(F) \cdot e(S^2)= p^3 - 4p^2 +7p - 2(-p^2+3p) = p(p-1)^2. 
\end{equation*}

Thus, the total number $n$ of vanishing cycles of the given fibration $\Sigma_{(p-1)(p-2)/2} \rightarrow S_p \# p (\barCP2) \rightarrow S^2$ is $n= p (p-1)^2$. The following table illustrates various cases 
\\

\begin{center}
 \begin{tabular}{|p{1cm}|p{2cm}|p{4.5cm}|p{5cm}|} 
 \hline
 \multicolumn{4}{|c|}{Lefschetz fibration on $S_p \# p (\barCP2)$} \\
[0.5ex] 
 \hline
 $p$ & Fiber genus & Number of singular fibers & Total space of the fibration \\ [0.5ex] 
 \hline
 2 & 0 & 2 & $\mathbb{CP}^2 \# 3\barCP2$ \\ [0.5ex] 
 \hline
 3 & 1 & 12 & $\mathbb{CP}^2 \# 9\barCP2$\\[0.5ex] 
 \hline
 4 & 3 & 36 & $K3 \# 4\barCP2$ \\[0.5ex] 
 \hline
 5 & 6 & 80 & Complex surface of general type \\
[0.5ex] 
 \hline
\end{tabular}
\end{center}
\vspace{0.5cm}

The diffeomorphism types of the total spaces of these fibrations for low values of $p$, which we included in the table above, can be found in \cite{GS} (see discussion on pages 23-24).

Now we will illustrate what our surgery operation $\xmf= S_p \dashrightarrow \xres$ does for various choices of $p$. 
When $p=2$, the singularity $x^2+y^2+z^2=0$ is of type $A_1$ which is a rational double point. Hence the Milnor fiber $M(2,2,2)$ is diffeomorphic to the minimal resolution which is a $D^2$ bundle over $S^2$ with Euler number -2. We have $S_2 = S^2 \times S^2$ but the surgery operation  $S_2 \dashrightarrow \xres$ does not change the diffeomorphism type.

When $p=3$, $S_3$ is diffeomorphic to $\mathbb{CP}^2 \# 6\barCP2$. We need to describe the complement of $M(3,3,3)$ in $S_3$. As we described in Section 5, the PALF of $M(3,3,3)$ corresponds to the left hand side of the star relation in Equation \ref{eq:rel}. When we cap off the three boundary components of the fibers of this PALF and glue in $T^2 \times D^2$, we get a Lefschetz fibration on the elliptic surface $E(1)= \mathbb{CP}^2 \# 9\barCP2$ with three sections which lie in the complement of $M(3,3,3)$. The complement of $M(3,3,3)$ in $E(1)$ is precisely the neighborhood of a regular fiber and these three sections. When we blow these sections down, we get the complement of $M(3,3,3)$ in $S_3$ which is a disk bundle over torus with Euler number +3. On the other hand the minimal resolution of $x^3+y^3+z^3 =0$ is a disk bundle over torus with Euler number -3. Hence $\xres$ is twisted $S^2$ bundle over $T^2$.

For general $p$ the argument goes as follows. We perturb $\Phi$ to a smooth curve $\widetilde \Phi$ of degree $p$ in $\mathbb{CP}^2$ and take the branched cover of $\mathbb{CP}^2$ branched along $\widetilde \Phi$. We get $\widetilde \pi: \xmf \to \mathbb{CP}^2$, so that $\widetilde \pi^{-1} (B)$ is diffeomorphic to $M(p,p,p)$. The complement $\mathbb{CP}^2 \setminus B$ is the neighborhood of a complex line $L$ which is a disk bundle over $S^2$ with Euler number +1. Now we look at the preimage of $\mathbb{CP}^2 \setminus B$ under $\widetilde \pi$. The branch curve $\widetilde \Phi$ intersects $L$ at $p$ points. Hence $\widetilde \pi^{-1}(L)$ is the branched cover of $S^2$ branched along $p$ points which implies $\widetilde \pi^{-1}(L)$ is a closed surface of genus $g= (p-1)(p-2)/2$. (We choose $B$ small enough so that it is away from the $p$ intersection points of $L$ and $\tilde\Phi$). The self intersection of $\widetilde \pi^{-1}(L)$ is $+p$, because the self intersection of $L$ is +1. Now $\widetilde \pi^{-1} (\mathbb{CP}^2 \setminus B)$ is a neighborhood of $\widetilde \pi^{-1}(L)$ which is a disk bundle over genus $g$ surface with Euler number $+p$. If we remove $M(p,p,p)$ from $S_p = \xmf$ and glue $\mmin$, this corresponds to taking the double of a disk bundle over a surface of genus $g$ with Euler number $p$. The result is an $S^2$ bundle over a surface of genus $g$. The bundle is twisted if $p$ is odd, untwisted if $p$ is even. 

\begin{corollary}
GC blow-up is the same as symplectic sum with $S_p$.
\end{corollary}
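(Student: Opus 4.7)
The plan is to recognize the GC blow-up as an instance of Gompf's symplectic sum construction, using the concrete description of $S_p$ developed in the preceding paragraph. Recall that the symplectic sum of $(X_1,\omega_1)$ and $(X_2,\omega_2)$ along symplectic surfaces $\Sigma_i \subset X_i$ of the same genus with $[\Sigma_1]^2 = -[\Sigma_2]^2$ is obtained by removing open tubular neighborhoods of the $\Sigma_i$ and gluing the two boundaries together by an orientation-reversing bundle isomorphism that reverses the fiber orientation.

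First, I would package the work from the preceding paragraph into the statement that $S_p$ contains a distinguished symplectic surface $F := \widetilde\pi^{-1}(L)$ of genus $g=(p-1)(p-2)/2$ with self-intersection $+p$, whose tubular neighborhood $N(F)$ has complement symplectomorphic to the Milnor fiber $M(p,p,p)$, and such that the contact structure induced on the common boundary $\Sigma(p,p,p)$ is the canonical one. Dually, given a symplectic $4$-manifold $X$ containing a symplectic surface $\Sigma$ of genus $g$ with self-intersection $-p$ (so that the GC blow-up is defined), Proposition \ref{GM-PS} identifies a neighborhood $N(\Sigma)$ with $\mmin(p,p,p)$ as a strong symplectic filling of $(\Sigma(p,p,p),\xican)$. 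Thus the two surfaces $\Sigma \subset X$ and $F \subset S_p$ have matching genera and opposite self-intersections, so the symplectic sum $X \#_{\Sigma=F} S_p$ is defined. Performing it gives
\begin{equation*}
X \#_{\Sigma = F} S_p \;=\; \bigl(X \setminus N(\Sigma)\bigr) \cup \bigl(S_p \setminus N(F)\bigr) \;=\; \bigl(X \setminus N(\Sigma)\bigr) \cup M(p,p,p) \;=\; X',
\end{equation*}
which is exactly the GC blow-up of $X$.

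The one point that needs care is that the gluing diffeomorphism prescribed by Gompf's construction really does agree with the one used to define the GC blow-up. Both gluings identify the boundary of $N(\Sigma)$ with the boundary of $N(F)$ via a contactomorphism of $(\Sigma(p,p,p),\xican)$; since the canonical contact structure is unique up to contactomorphism by \cite{CNP}, and since Gompf's gluing allows any orientation-reversing bundle isomorphism of the unit circle bundles (which, up to isotopy of the diffeomorphism group of the fillings, absorbs such a contactomorphism), the resulting symplectic $4$-manifolds are symplectically deformation equivalent. This is the main technical point; once it is settled, the corollary follows immediately from the set-theoretic equality above.
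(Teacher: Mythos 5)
Your proof is correct and follows essentially the same route as the paper: identify the surface $\Sigma' \subset S_p$ of genus $(p-1)(p-2)/2$ and self-intersection $+p$ whose complement is $M(p,p,p)$, match it against $N(\Sigma)\cong \mmin(p,p,p)$ from Proposition \ref{GM-PS}, and observe that removing $N(\Sigma)$ and gluing $M(p,p,p)$ is exactly identifying the two complements as in the symplectic sum. Your extra paragraph spelling out why the Gompf gluing map agrees with the contactomorphic boundary identification (via uniqueness of the canonical contact structure) is a useful elaboration of what the paper dismisses with ``It is clear.''
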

\begin{proof}
Recall that in order to do GC blow-up one needs to find an embedded symplectic surface $\Sigma$ of genus $g = (p-1)(p-2)/2$ with self intersection $-p$. Then a regular neighborhood $N(\Sigma)$ is symplectomorphic to $\mmin (p,p,p)$. On the other hand $S_p$ contains a symplectic surface  $\Sigma'$ with the same genus but opposite self intersection and the complement is the Milnor fiber $M(p,p,p)$. It is clear that removing $N(\Sigma)$ and gluing $M(p,p,p)$ is the same as identifying complements of $\Sigma$ and $\Sigma'$ as in symplectic sum.
\end{proof}

\begin{corollary}
For every positive integer $p$, the Lefschetz pencil over $S^2$ with fiber genus $(p-1)(p-2)/2$ described by the mapping class group relation given in Equation \ref{eq:rel}, has total space $S_p$. 
\end{corollary}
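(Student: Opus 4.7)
The plan is to realize the total space $Y$ of the Lefschetz pencil defined by the factorization in Equation \ref{eq:rel} as the union of the Milnor fiber $M(p,p,p)$ and a disk bundle over $\Sigma_g$ of Euler number $+p$, and then to match this decomposition with the one of $S_p$ given in Section \ref{Symp Sum}.

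First, I would choose a disk $D_0 \subset S^2$ containing all $p(p-1)^2$ critical values of the pencil, and let $Y_{\mathrm{bot}}$ denote the preimage of $D_0$ with small neighborhoods of the $p$ base points removed from each fiber. Then $Y_{\mathrm{bot}}$ is a Lefschetz fibration over $D_0$ with fiber $\Sigma_g^p$ and monodromy $w^p = \left(\prod_{i,j} t_{i,j}\right)^p$. By Theorem \ref{thm4.1} and the PALF construction of Section \ref{Milnor Fibers of Brieskorn Singularities}, this Lefschetz fibration is precisely the Milnor fiber $M(p,p,p)$ with its canonical Stein structure, so $M(p,p,p)$ symplectically embeds in $Y$.

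Next, consider the complement over $D_\infty = S^2 \setminus \mathrm{int}(D_0)$. Since there are no critical values here, the monodromy over $\partial D_\infty$ in $MCG(\Sigma_g^p)$ is $\prod_k \delta_k$ by the relation. Capping each boundary circle of $\Sigma_g^p$ with a disk yields $\Sigma_g$ and turns the relation into $w^p = 1$ in $MCG(\Sigma_g)$, producing a closed Lefschetz fibration $\widetilde Y \to S^2$. The $p$ capping disks, extended across $S^2$ against the $p$ base points of the pencil, become $p$ disjoint spheres of self-intersection $-1$ in $\widetilde Y$; blowing them down recovers $Y$, and the preimage of $D_\infty$ in $Y$ becomes a disk bundle over $\Sigma_g$ of Euler number $+p$ (the $+p$ being recorded by the $p$ boundary twists $\prod_k \delta_k$ in the factorization).

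By Theorem \ref{thm:match}, the horizontal open books on $\partial M(p,p,p)$ induced by the PALF and by the fibration of Proposition \ref{GM-PS} have the same binding vector $(p)$, so by Theorem \ref{CNP thm} they coincide. This gives a canonical identification of the boundaries, and the gluing of $M(p,p,p)$ to the disk bundle over $\Sigma_g$ of Euler number $+p$ along this boundary is uniquely determined up to diffeomorphism. On the other hand, the construction in Section \ref{Symp Sum} decomposes $S_p = \widetilde\pi^{-1}(B) \cup \widetilde\pi^{-1}(\mathbb{CP}^2 \setminus B)$ via the $p$-fold branched cover $\widetilde\pi : S_p \to \mathbb{CP}^2$, with $\widetilde\pi^{-1}(B) \cong M(p,p,p)$ and the complement a disk bundle over $\Sigma_g$ of Euler number $+p$, and this decomposition is compatible with the same boundary open book. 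Therefore $Y \cong S_p$.

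The main obstacle I anticipate is the second step: verifying that the preimage of $D_\infty$ in $Y$ is precisely a disk bundle over $\Sigma_g$ of Euler number $+p$. The heart of this is checking that the $p$ capping sections have self-intersection $-1$, a fact encoded in the appearance of the boundary Dehn twists $\prod_k \delta_k$ on the right-hand side of Equation \ref{eq:rel}. Once this is established, the matching of open books from Theorem \ref{thm:match} together with Theorem \ref{CNP thm} ensures that the gluing to $M(p,p,p)$ is exactly the one in the branched cover realization of $S_p$ from Section \ref{Symp Sum}, closing out the identification.
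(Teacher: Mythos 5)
Your proposal is correct and follows essentially the same route as the paper, which leaves this corollary as an immediate consequence of the Section \ref{Symp Sum} analysis: the total space of the pencil determined by Equation \ref{eq:rel} is $M(p,p,p)$ (the PALF with monodromy $w^p$) capped off by a disk bundle over $\Sigma_g$ of Euler number $+p$, exactly matching the branched-cover decomposition $S_p = M(p,p,p)\cup \widetilde\pi^{-1}(\mathbb{CP}^2\setminus B)$, with the $(-1)$-sections/blow-down bookkeeping done just as in the paper's $p=3$ discussion. Your reconstruction of this argument, including the identification of the cap via the boundary twists, is the intended one.
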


\section{Relationship with the chain surgery}
\label{Relationship with the chain surgery}
In this section we briefly discuss the relation between our surgeries and the chain surgery. Recall the chain relation in the mapping class group. Suppose we have $m$ curves $\alpha_1, \cdots, \alpha_m$ lying on a surface $F$ which satisfy the following intersection data
\begin{equation}
\# \alpha_i \cap \alpha_j = 1,  \mathrm{if } \; i-j=1, \quad
\# \alpha_i \cap \alpha_j = 0,  \mathrm{if }\; i-j \geq 2.
\end{equation}
We will call the system $\{\alpha_1, \cdots, \alpha_m\}$ an $m$-chain. Clearly an $m$-chain is $1\times m$ packing in our previous terminology. Suppose $m$ is even. Write $m=2g$. Then a neighborhood of $\bigcup_{i=1}^m \alpha_i$ in $F$ is homeomorphic to a surface $\Sigma_g^1$ with genus $g$ and with one boundary component. Let $t_i$ denote the right handed Dehn twist along $\alpha_i$ and let $t_{\delta}$ denote the right handed Dehn twist along the boundary parallel curve $\delta$. The chain relation is given by the following equation.

\begin{equation}
(t_1 \cdots t_{2g})^{4g+2} = t_{\delta}
\label{Chain1}
\end{equation}

In the left hand side of Equation \ref{Chain1}, the monodromy is along a $1\times 2g$ packing raised to the power $4g+2$, so this factorization describes the Lefschetz fibration on the Milnor fiber $M(2,2g+1, 4g+2)$.  

We first resolve the plane curve singularity $x^{2g+1}+y^{4g+2}$. The resolution graph with the multiplicities are shown in the first step of Figure \ref{ResAlgor} (see for example Ex.7.2.4b in \cite{GS}). Then we take double branched cover according to the algorithm in \cite{GS}, p.252 and we obtain a resolution graph of $x^{2g+1}+y^{4g+2}+z^2=0$ as in the second step of Figure \ref{ResAlgor} which is not minimal. Then we blow it down and get genus $g$ surface of self intersection $-1$.

\begin{figure}[h] 
		\includegraphics[width=.8\textwidth]{./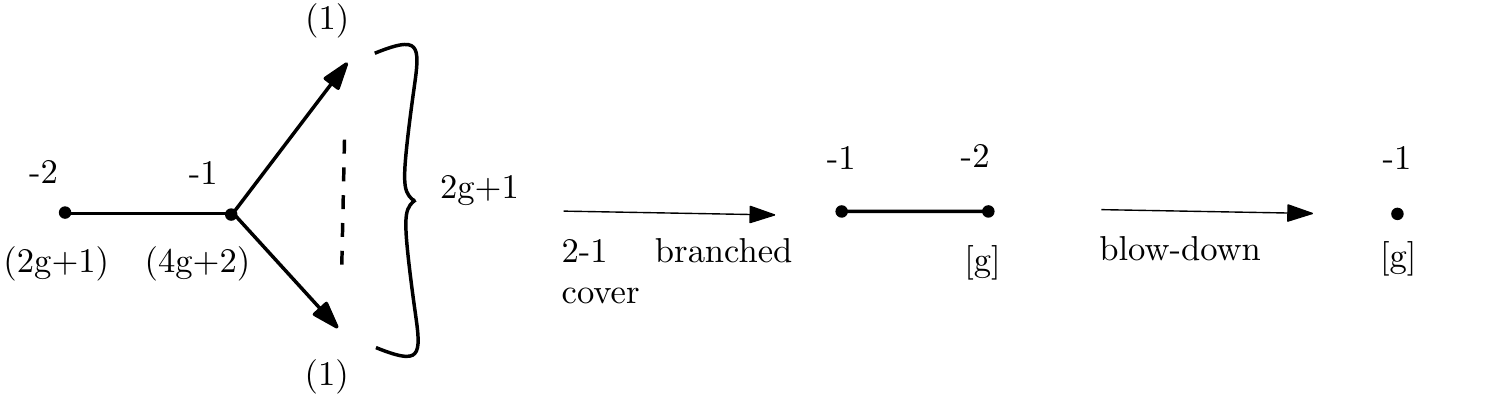}
		\caption{}
		\label{ResAlgor}
	\end{figure}


When $m$ is odd we write $m=2g+1$ and a neighborhood of $\bigcup_{i=1}^m \alpha_i$ is homeomorphic to $\Sigma_g^2$ with genus $g$ and with two boundary components. The chain relation in this case is 

\begin{equation}
(t_1 \cdots t_{2g+1})^{2g+2} = t_{\delta_1} t_{\delta_2}
\label{Chain2}
\end{equation}

Likewise, the left hand side of Equation \ref{Chain2} corresponds to the Milnor fiber $M(2,2g+2, 2g+2)$. The right hand side describes a Lefschetz fibration over the disk bundle over a surface of genus $g$ with self intersection $-2$. This is exactly the resolution of the singularity $x^2+y^{2g+2}+z^{2g+2}$ which we find similarly as above.

In any case, we see that chain substitution corresponds to exchanging the minimal resolution with the Milnor fiber of a Brieskorn singularity. One can also prove that this operation can be interpreted as a symplectic sum.

\bibliography{References}
\bibliographystyle{amsalpha}

\end{document}